\providecommand{\U}[1]{\protect\rule{.1in}{.1in}}
\providecommand{\U}[1]{\protect\rule{.1in}{.1in}}
\providecommand{\U}[1]{\protect\rule{.1in}{.1in}}
\providecommand{\U}[1]{\protect\rule{.1in}{.1in}}
\providecommand{\U}[1]{\protect\rule{.1in}{.1in}}
\newtheorem{theorem}{Theorem}[section]
\newtheorem{proposition}[theorem]{Proposition}
\newtheorem{lemma}[theorem]{Lemma}
\theoremstyle{definition}
\begin{document}
\title[The best constants in the Multiple Khintchine Inequality]{The best constants in the Multiple Khintchine Inequality}
\author[D. N\'u\~nez]{Daniel N\'{u}\~{n}ez-Alarc\'{o}n}
\address{Departamento de Matem\'{a}ticas\\
\indent Universidad Nacional de Colombia\\
\indent111321 - Bogot\'a, Colombia}
\email{danielnunezal@gmail.com}
\author[D. M. Serrano]{Diana Marcela Serrano-Rodr\'{\i}guez}
\address{Departamento de Matem\'{a}ticas\\
\indent Universidad Nacional de Colombia\\
\indent111321 - Bogot\'{a}, Colombia}
\email{dmserrano0@gmail.com}
\thanks{}
\subjclass[2010]{11Y60, 46B09, 46G25, 60B11.}
\keywords{Khintchine inequality, mixed $\left(  \ell_{\frac{p}{p-1}},\ell_{2}\right)
$-Littlewood inequality, multiple Khintchine inequality.}

\begin{abstract}
In this work we provide the best constants of the multiple Khintchine
inequality. This allows us, among other results, to obtain the best constants
of the mixed $\left(  \ell_{\frac{p}{p-1}},\ell_{2}\right)  $-Littlewood
inequality, thus ending completely a work started by Pellegrino in \cite{pell}.

\end{abstract}
\maketitle

\section{Introduction}

Let $\left(  a_{i}\right)  _{i=1}^{\infty}$ be a scalar sequence. In 1922 Hans
Rademacher showed that if, for almost all choice of signs $\delta_{i}$, the
series $\sum_{i=1}^{\infty}a_{i}\delta_{i}$ converges, $\sum_{i=1}^{\infty
}\left\vert a_{i}\right\vert ^{2}$ also does it. A few months later, Aleksandr
Khintchine showed the following more general result (see \cite{khinchine}):

\begin{theorem}
[Khintchine inequality]For any $0<p<\infty$, there is a positive constant
$\mathrm{A}_{p}$ (depends only on $p$) such that regardless of the scalar
sequence $(a_{j})_{j=1}^{n}\in\mathbb{K}^{n}$,we have
\begin{equation}
\left(  \sum_{j=1}^{n}|a_{j}|^{2}\right)  ^{\frac{1}{2}}\leq\mathrm{A}%
_{p}\left(  \int_{0}^{1}\left\vert \sum_{j=1}^{n}a_{j}r_{j}(t)\right\vert
^{p}dt\right)  ^{\frac{1}{p}}, \label{khin}%
\end{equation}
where $r_{j}$ denotes the $j-$th Rademacher function.
\end{theorem}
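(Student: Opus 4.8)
The plan is to reduce the inequality to the single nontrivial range $0<p<2$. Indeed, for $p\geq 2$ the claim is immediate: the integral is taken against the probability measure $dt$ on $[0,1]$, so by the monotonicity of $L^p$ norms on a probability space one has $\|f\|_2\leq\|f\|_p$ for $f=\sum_{j=1}^{n}a_jr_j$, and $\mathrm{A}_p=1$ works. The entire content therefore lies in the small exponents, where $\|f\|_p$ may be strictly smaller than $\|f\|_2$.

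First I would record the exact value of the left-hand side. Since the Rademacher functions are orthonormal in $L^2[0,1]$, with $\int_0^1 r_j(t)r_k(t)\,dt=\delta_{jk}$, expanding the square gives
\[
\int_0^1\left|\sum_{j=1}^{n}a_jr_j(t)\right|^2 dt=\sum_{j=1}^{n}|a_j|^2,
\]
so the left side of (\ref{khin}) is exactly $\|f\|_2$, and the asserted inequality is the reverse comparison $\|f\|_2\leq \mathrm{A}_p\|f\|_p$.

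The heart of the argument is an upper moment bound in the opposite direction: $\|f\|_q\leq B_q\|f\|_2$ for some fixed $q>2$. I would extract this from the independence and mean-zero property of the $r_j$. The cleanest route is a subgaussian estimate: using $\cosh(x)\leq e^{x^2/2}$ together with independence,
\[
\int_0^1\exp\left(\lambda\sum_{j=1}^{n}a_jr_j(t)\right)dt=\prod_{j=1}^{n}\cosh(\lambda a_j)\leq\exp\left(\tfrac{\lambda^2}{2}\sum_{j=1}^{n}a_j^2\right),
\]
which, after optimizing in $\lambda$, yields Gaussian tail bounds for $f$ and hence $\|f\|_q\leq B_q\|f\|_2$ for every $q>2$ (for real scalars; the complex case follows by splitting into real and imaginary parts). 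Alternatively, for an even integer $q=2k$ one computes $\int_0^1 f^{2k}$ by the multinomial expansion, discards every monomial in which some $r_j$ appears to an odd power, and compares the surviving coefficients with those of $\big(\sum_j a_j^2\big)^k$ to obtain $\|f\|_{2k}\leq\big((2k-1)!!\big)^{1/2k}\|f\|_2$.

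Finally I would interpolate. Fixing such a $q>2$ and given $0<p<2$, choose $\theta\in(0,1)$ with $\tfrac12=\tfrac{\theta}{p}+\tfrac{1-\theta}{q}$; the log-convexity of the $L^s$ norms (equivalently Hölder's inequality) gives
\[
\|f\|_2\leq\|f\|_p^{\theta}\,\|f\|_q^{1-\theta}\leq\|f\|_p^{\theta}\,(B_q\|f\|_2)^{1-\theta}.
\]
Dividing by $\|f\|_2^{1-\theta}$ (the case $f\equiv 0$ being trivial) and rearranging isolates $\|f\|_2\leq B_q^{(1-\theta)/\theta}\|f\|_p$, so $\mathrm{A}_p=B_q^{(1-\theta)/\theta}$ is admissible. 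The main obstacle is the moment estimate $\|f\|_q\leq B_q\|f\|_2$; everything else is either an exact identity or a routine application of Hölder. I would emphasize that this scheme yields \emph{some} valid constant rather than the optimal one, and that pinning down the best $\mathrm{A}_p$ is precisely the finer question the present paper addresses.
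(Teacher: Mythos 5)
Your argument is correct, but there is nothing in the paper to compare it against: the paper states the Khintchine inequality as a classical quoted result, citing Khintchine for its existence and Haagerup for the optimal constants $\mathrm{A}_p$, and never proves it. What it does observe (the identity (\ref{jo19}) from orthonormality, and the remark that duality gives the companion upper bound) corresponds only to the first, trivial step of your scheme. Your proof is the standard self-contained route: reduce to $0<p<2$ by monotonicity of $L^s$ norms on a probability space, compute $\|f\|_2$ exactly via orthonormality, establish a reverse moment bound $\|f\|_q\leq B_q\|f\|_2$ for some $q>2$ (either by the $\cosh$ subgaussian estimate with the complex case handled by splitting into real and imaginary parts, or by the even-moment combinatorial count giving $B_{2k}=((2k-1)!!)^{1/2k}$), and close by log-convexity of $s\mapsto\|f\|_s$, noting that the H\"older exponents $\frac{p}{2\theta}$ and $\frac{q}{2(1-\theta)}$ are conjugate and greater than $1$ even when $p<1$. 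All steps check out, and you are right that this yields some admissible $\mathrm{A}_p$ rather than the sharp one; the sharp constants, which are what the paper actually uses downstream (e.g.\ in Theorem \ref{multik}), require Haagerup's much more delicate analysis and are simply imported here by citation.
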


Above and henceforth, $\mathbb{K}$ denotes the scalar field $\mathbb{R}$ or
$\mathbb{C}$.\newline

The \textit{Rademacher functions} are defined as follows
\begin{align*}
r_{j}  &  :\left[  0,1\right]  \longrightarrow\mathbb{R}\text{, }%
j\in\mathbb{N}\\
r_{j}\left(  t\right)   &  :=sign\left(  \sin2^{j}\pi t\right)
\end{align*}

In probabilistic terms, the system $\left(  r_{j}\left(  t\right)  \right)
_{j=1}^{\infty}$\ is a sequence of independent identically distributed random
variables defined on the closed interval $\left[  0,1\right]  $ with a
Lebesgue measure on its Borel subsets. Up to a linear transformation,
Rademacher functions describe binomial trials with probability of success
equal to $\frac{1}{2}$.\newline

These functions have, among others, the following important properties: The
\textit{Rademacher functions} are an orthonormal sequence in $L_{2}\left[
0,1\right]  $, and then
\begin{equation}
\int_{0}^{1}\left\vert \sum_{j=1}^{\infty}a_{j}r_{j}\left(  t\right)
\right\vert ^{2}dt=\sum_{j=1}^{\infty}\left\vert a_{j}\right\vert ^{2}
\label{jo19}%
\end{equation}
for all $a=\left(  a_{j}\right)  _{j=1}^{\infty}\in\ell_{2}$.

On the other hand, for every $n\in\mathbb{N}$ and $\delta_{1},\delta
_{2},...,\delta_{n}=\pm1$, $\sum_{j=1}^{n}a_{j}r_{j}\left(  t\right)  $ and
$\sum_{j=1}^{n}\delta_{j}a_{j}r_{j}\left(  t\right)  $ have the same
distribution in $L_{p}$, for all $p>0$. That means, for every $n\in\mathbb{N}$
and $\delta_{1},\delta_{2},...,\delta_{n}=\pm1$,%
\begin{equation}
\int_{0}^{1}\left\vert \sum_{j=1}^{n}a_{j}r_{j}\left(  t\right)  \right\vert
^{p}dt=\int_{0}^{1}\left\vert \sum_{j=1}^{n}\delta_{j}a_{j}r_{j}\left(
t\right)  \right\vert ^{p}dt, \label{equal}%
\end{equation}
for all $p>0$.

\smallskip

Inductively, for every $m\in\mathbb{N}$, if $n\in\mathbb{N}$ and $\left(
y_{i_{1}...i_{m}}\right)  _{i_{1,}...,i_{m}=1}^{n}$ is an array of scalars,
then for any choice of signs $\left(  \varepsilon_{i_{1}}\right)  _{i_{1}%
=1}^{n},\left(  \varepsilon_{i_{2}}\right)  _{i_{2}=1}^{n},...,\left(
\varepsilon_{i_{m}}\right)  _{i_{m}=1}^{n}\in\{-1,1\}^{n}$ we have%
\begin{align}
&  \int_{I^{m}}\left\vert \sum_{i_{1},...,i_{m}=1}^{n}y_{i_{1}...i_{m}%
}r_{i_{1}}(t_{1})\cdots r_{i_{m}}(t_{m})\right\vert ^{p}dt_{1}\cdots
dt_{m}\label{equalm}\\
&  =\int_{I^{m}}\left\vert \sum_{i_{1},...,i_{m}=1}^{n}y_{i_{1}...i_{m}%
}r_{i_{1}}(t_{1})\cdots r_{i_{m}}(t_{m})\varepsilon_{i_{1}}\cdots
\varepsilon_{i_{m}}\right\vert ^{p}dt_{1}\cdots dt_{m}\nonumber
\end{align}
for all $p>0$. Above and henceforth $I$ denotes the closed interval $[0,1]$.

Using duality and (\ref{jo19}) we also get a similar upper bound in
(\ref{khin}). In other words, Khintchine inequality shows that we can control
the sum $\sum_{j=1}^{\infty}a_{j}r_{j}$ in any $L_{p}$ norm by the $\ell_{2}%
-$norm of the scalar sequence $\left(  a_{j}\right)  _{j=1}^{\infty}$.

\smallskip

Khintchine inequality is originally a result from probability, but it is also
frequently used in Analysis and Topology (see \cite{bob, wast, Di, Eich,
garling, gine, pel}). The importance of the Rademacher functions and the
Khintchine inequality in Functional Analysis lies mainly on the fact of its
utility in the study of the geometry of Banach spaces (see \cite{bob, Di,
garling}). Furthermore, the concern of the Rademacher functions in the theory
of functional and trigonometric series and in the theory of Banach spaces is
well known and it is commonly attributable to stochastic independence of the
Rademacher functions. One of the main manifestations of this stochastic
independence is, namely, the Khintchine inequality. Moreover, the Khintchine
inequality (and also related results and its variants) is an important
auxiliary result frequently used to prove results concerning to summability,
specially the case in which $1\leq p\leq2$ (see \cite{Di}).

\smallskip

The optimal constants $\mathrm{A}_{p}$ are known; obviously $\mathrm{A}%
_{p}=1,$ for $2\leq p<\infty,$ and by the other hand, Uffe Haagerup
(\cite{Ha}) proved that
\[
\mathrm{A}_{p}=\frac{1}{\sqrt{2}}\left(  \frac{\Gamma\left(  \frac{p+1}%
{2}\right)  }{\sqrt{\pi}}\right)  ^{-\frac{1}{p}},\ \ \text{ for }1.85\approx
p_{0}<p<2
\]
and
\[
\mathrm{A}_{p}=2^{\frac{1}{p}-\frac{1}{2}},\ \ \text{ for }0<p\leq
p_{0}\approx1.85.
\]
The exact definition of $p_{0}$ is the following: $p_{0}\in(1,2)$ is the
unique real number satisfying%

\begin{equation}
\Gamma\left(  \frac{p_{0}+1}{2}\right)  =\frac{\sqrt{\pi}}{2}. \label{pezero}%
\end{equation}
{}

In this work we will show, among other results, an interesting connection
between the Khintchine inequality and a famous inequality of Hardy and Littlewood.

\smallskip

For $p\geq1$, we introduce the following notation: $X_{p}:=\ell_{p}$ and
$X_{\infty}:=c_{0}$. From now on, $\left(  e_{k}\right)  _{k=1}^{\infty}$
denotes the sequence of canonical vectors in $X_{p}$. For any function $f$ we
shall consider $f(\infty):=\lim_{s\rightarrow\infty}f(s)$ and for any $s>1$ we
denote the conjugate index of $s$ by $s^{\ast},$ i.e., $\frac{1}{s}+\frac
{1}{s^{\ast}}=1$. Furthermore, we denote $1^{\ast}$ by $\infty$.

\smallskip Let $p,q\in\lbrack1,\infty]$. We recall that for a continuous
bilinear form $T:X_{p}\times X_{q}\rightarrow\mathbb{K}$:%
\begin{align*}
\left\Vert T\right\Vert  &  =\sup\left\{  \left\vert T\left(  x,y\right)
\right\vert :\left\Vert x\right\Vert _{X_{p}}\leq1,~\left\Vert y\right\Vert
_{X_{q}}\leq1\right\} \\
&  =\sup\left\{  \left\vert \sum_{i,j}a_{ij}x_{i}y_{j}\right\vert :\left\Vert
x\right\Vert _{X_{p}}\leq1,~\left\Vert y\right\Vert _{X_{q}}\leq1\right\}  ,
\end{align*}
where $T\left(  e_{i},e_{j}\right)  =a_{ij}$, for all $i,j\in\mathbb{N}.$

The Hardy--Littlewood inequality (\cite{hardy}, 1934) is a continuation of a
famous work of Littlewood (\cite{LLL}, 1930) and can be stated as follows:

\begin{itemize}
\item \cite[Theorems 2 and 4]{hardy} If $p,q\in\lbrack2,\infty]$ are such
that
\[
\frac{1}{2}<\frac{1}{p}+\frac{1}{q}<1
\]
then there is a constant $C_{p,q}\geq1$ such that
\[
\left(  \sum\limits_{j,k=1}^{\infty}\left\vert A(e_{j},e_{k})\right\vert
^{\frac{pq}{pq-p-q}}\right)  ^{\frac{pq-q-p}{pq}}\leq C_{p,q}\left\Vert
A\right\Vert
\]
for all continuous bilinear forms $A:X_{p}\times X_{q}\rightarrow\mathbb{K}$.
Moreover the exponent $\frac{pq}{pq-p-q}$ is optimal.

\item \cite[Theorems 1 and 4]{hardy} If $p,q\in\lbrack2,\infty]$ are such
that
\[
\frac{1}{p}+\frac{1}{q}\leq\frac{1}{2}%
\]
then there is a constant $C_{p,q}\geq1$ such that
\[
\left(  \sum\limits_{j,k=1}^{\infty}\left\vert A(e_{j},e_{k})\right\vert
^{\frac{4pq}{3pq-2p-2q}}\right)  ^{\frac{3pq-2p-2q}{4pq}}\leq C_{p,q}%
\left\Vert A\right\Vert
\]
for all continuous bilinear forms $A:X_{p}\times X_{q}\rightarrow\mathbb{K}$.
Moreover the exponent $\frac{4pq}{3pq-2p-2q}$ is optimal, where, for the case
that $p$ and $q$ are simultaneously $\infty$, the optimal exponent is $4/3$.
\end{itemize}

As mentioned in \cite{was}, an unified version of the above two results of
Hardy and Littlewood asserts that:

\begin{theorem}
\label{tony}Let $p,q\in\lbrack2,\infty]$ be such that $\lambda:=\frac
{pq}{pq-p-q}>0.$ There is a constant $C_{p,q}\geq1$ such that
\[
\left(  \sum\limits_{j=1}^{\infty}\left(  \sum\limits_{k=1}^{\infty}\left\vert
A(e_{j},e_{k})\right\vert ^{2}\right)  ^{\frac{\lambda}{2}}\right)  ^{\frac
{1}{\lambda}}\leq C_{p,q}\left\Vert A\right\Vert
\]
for all continuous bilinear forms $A:X_{p}\times X_{q}\rightarrow$
$\mathbb{K}$.\bigskip\ Moreover the exponents $2$ and $\lambda$ are optimal,
where, for the case that $p$ and $q$ are simultaneously $\infty$, the optimal
exponent $\lambda$ is $1$.
\end{theorem}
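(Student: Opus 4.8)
The plan is to reduce everything to a single bilinear form in finitely many variables, so that all estimates are stated for the matrix $a_{jk}:=A(e_{j},e_{k})$ with $1\le j,k\le n$, keeping in mind that the constant $C_{p,q}$ must come out independent of $n$. Writing $M_{j}:=\left(\sum_{k=1}^{n}|a_{jk}|^{2}\right)^{1/2}$, the goal is the dimension-free bound $\left(\sum_{j}M_{j}^{\lambda}\right)^{1/\lambda}\le C_{p,q}\left\Vert A\right\Vert$. The heart of the argument is to \emph{manufacture the inner $\ell_{2}$-norm through the Khintchine inequality} and to \emph{peel off one variable by duality}, exploiting that the unit balls of $X_{p}$ and $X_{q}$ are invariant under sign changes, together with the Rademacher invariance (\ref{equal}).

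The first main step randomizes the $X_{q}$-variable. Fix $t\in I$ and $y$ with $\left\Vert y\right\Vert_{X_{q}}\le 1$; since $|r_{k}(t)|=1$, the vector $(y_{k}r_{k}(t))_{k}$ still lies in the unit ball of $X_{q}$, so the functional $x\mapsto A(x,(y_{k}r_{k}(t))_{k})$ has norm at most $\left\Vert A\right\Vert$ on $X_{p}$. Using $X_{p}^{\ast}=\ell_{p^{\ast}}$ one obtains $\left(\sum_{j}\left\vert\sum_{k}a_{jk}y_{k}r_{k}(t)\right\vert^{p^{\ast}}\right)^{1/p^{\ast}}\le\left\Vert A\right\Vert$ for every $t$. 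Raising to the power $p^{\ast}$, integrating in $t$, applying Tonelli, and then invoking the Khintchine inequality (\ref{khin}) to the scalar sequence $(a_{jk}y_{k})_{k}$ for each fixed $j$, I would arrive at the weighted estimate
\[
\sum_{j=1}^{n}\left(\sum_{k=1}^{n}|a_{jk}|^{2}|y_{k}|^{2}\right)^{p^{\ast}/2}\le \mathrm{A}_{p^{\ast}}^{\,p^{\ast}}\left\Vert A\right\Vert^{p^{\ast}},
\]
valid for all $y$ in the unit ball of $X_{q}$. (When $q=\infty$ one may take $y_{k}=r_{k}(t)$ directly, no weights survive, and since then $\lambda=p^{\ast}$ the conclusion is immediate; this is the classical mixed $(\ell_{1},\ell_{2})$ argument of Littlewood.)

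The decisive and most delicate step is to pass from this \emph{weighted} inequality with outer exponent $p^{\ast}$ to the \emph{unweighted, sharp} outer exponent $\lambda$, governed by the arithmetic $\tfrac{1}{\lambda}=\tfrac{1}{p^{\ast}}-\tfrac{1}{q}$. A direct choice of a single $y$ cannot recover $\sum_{k}|a_{jk}|^{2}$ simultaneously for all rows $j$, and a crude use of $\left\Vert r(t)\right\Vert_{X_{q}}$ reintroduces a forbidden factor $n^{1/q}$; this is exactly where the difficulty lies. The plan is to resolve it by an anisotropic H\"older (mixed-norm) interpolation in the $X_{q}$-variable between the endpoints $q=2$ and $q=\infty$, which precisely interpolates the outer exponent from $p^{\ast}$ up to $\lambda$ while absorbing the weights $|y_{k}|$, producing a constant depending only on $p,q$ and on $\mathrm{A}_{p^{\ast}}$. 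I expect verifying that this interpolation is dimension-free to be the main obstacle, and the step where the exact value $\lambda=\tfrac{pq}{pq-p-q}$ is forced.

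Finally, for the optimality of the pair of exponents $(2,\lambda)$ I would use the Kahane--Salem--Zygmund inequality to construct bilinear forms whose matrices make any smaller exponent fail, handling the degenerate case $p=q=\infty$ (where the optimal $\lambda$ is $1$) separately. As a consistency check, the two classical statements of Hardy and Littlewood displayed above are recovered from the mixed inequality: since $\lambda\ge 2$ exactly in the range $\tfrac12<\tfrac1p+\tfrac1q<1$, the inclusion $\left\Vert c_{j\cdot}\right\Vert_{\ell_{\lambda}}\le\left\Vert c_{j\cdot}\right\Vert_{\ell_{2}}$ shows the mixed bound dominates the pure $\ell_{\lambda}$ bound, so Theorem~\ref{tony} is genuinely stronger than, and implies, both classical cases.
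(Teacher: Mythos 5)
A preliminary remark: the paper never proves Theorem~\ref{tony}. It is quoted as a known unification of the two Hardy--Littlewood theorems, with the proof deferred to \cite{hardy} and \cite{was}; the only piece the paper actually establishes is the endpoint $q=\infty$ (Theorem~\ref{ppp}), and your first step is essentially that argument. So your proposal must be judged on its own, and its central step does not go through. Step 1 is correct: randomizing the second variable and applying Khintchine row by row yields, for every $y\in B_{X_q}$, the weighted bound $\sum_{j}\bigl(\sum_{k}|a_{jk}|^{2}|y_{k}|^{2}\bigr)^{p^{\ast}/2}\leq \mathrm{A}_{p^{\ast}}^{p^{\ast}}\Vert A\Vert^{p^{\ast}}$. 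The gap is that this family of weighted estimates, by itself, is \emph{provably insufficient} to imply the unweighted $\ell_{\lambda}(\ell_{2})$ bound with a dimension-free constant, so no ``interpolation in the $y$-variable'' can close the argument without injecting new information about $A$. Concretely, take $p=q=4$ (so $p^{\ast}=4/3$, $\lambda=2$) and the $n\times n$ matrix $a_{1k}=1$ for all $k$, $a_{jk}=0$ for $j\geq 2$. Then $\sup_{\Vert y\Vert_{4}\leq 1}\bigl(\sum_{j}(\sum_{k}|a_{jk}|^{2}|y_{k}|^{2})^{2/3}\bigr)^{3/4}=\sup_{\Vert y\Vert_{4}\leq1}\Vert y\Vert_{2}^{3/2}=n^{1/4}$, while the target quantity is $\bigl(\sum_{j}(\sum_{k}|a_{jk}|^{2})^{1}\bigr)^{1/2}=n^{1/2}$. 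Any derivation of the conclusion from the step-1 output alone would therefore have to prove $n^{1/2}\leq C\,n^{1/4}$ uniformly in $n$. (The theorem itself is not violated here because $\Vert A\Vert=n^{3/4}$; the point is that step 1 has discarded exactly the information that makes the theorem true.) Structurally, the mismatch is that duality for the outer $\ell_{\lambda}$-norm calls for a weight on the \emph{row} index $j$, coming from the $X_{p}$-variable, whereas your weights $|y_{k}|$ sit on the column index.

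The proposed repair --- an ``anisotropic H\"older interpolation between the endpoints $q=2$ and $q=\infty$'' --- is not developed, and as stated it is circular: the $q=2$ endpoint (namely $\bigl(\sum_{j}(\sum_{k}|a_{jk}|^{2})^{\lambda/2}\bigr)^{1/\lambda}\leq C\Vert A\Vert$ with $\tfrac{1}{\lambda}=\tfrac12-\tfrac1p$ for $A:X_{p}\times X_{2}\to\mathbb{K}$) is itself a nontrivial Hardy--Littlewood-type theorem that your argument nowhere establishes. The known proofs supply precisely the missing ingredient in other ways: either the original Hardy--Littlewood change-of-density argument, or a second (``transposed'') mixed-norm estimate combined with a Blei-type interpolation of mixed $\ell_{p}$-norms, or the regularity principle of \cite{wast}. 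Your closing consistency check (recovering the first classical statement when $\lambda\geq2$ via $\Vert\cdot\Vert_{\ell_{\lambda}}\leq\Vert\cdot\Vert_{\ell_{2}}$ on rows) and the appeal to Kahane--Salem--Zygmund for optimality are both standard and reasonable, but the optimality part is likewise only a pointer, not a proof.
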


The optimal constants of the previous inequalities are essentially unknown;
these depend of the chosen scalar field. One of the few cases in which the
optimal constants are known is the case of the mixed $\left(  \ell_{\frac
{p}{p-1}},\ell_{2}\right)  $-Littlewood inequality (see \cite{racsam, pell,
PT}):

\begin{theorem}
[Mixed $\left(  \ell_{\frac{p}{p-1}},\ell_{2}\right)  $-Littlewood
inequality]Let $p\geq2$. There is a constant $C_{p,\infty}$ such that
\[
\left(  \sum_{i=1}^{\infty}\left(  \sum_{j=1}^{\infty}|T(e_{i},e_{j}%
)|^{2}\right)  ^{\frac{1}{2}\frac{p}{p-1}}\right)  ^{\frac{p-1}{p}}\leq
C_{p,\infty}\Vert T\Vert,
\]
for all continuous bilinear forms $T:X_{p}\times X_{\infty}\rightarrow
\mathbb{R}$. Moreover, $2^{\frac{1}{2}-\frac{1}{p}}\leq C_{p,\infty}%
\leq\mathrm{A}_{\frac{p}{p-1}}$. Particularly, if $p_{0}$ is the number
defined in (\ref{pezero}) then the optimal constant $C_{p,\infty}$ is
$2^{\frac{1}{2}-\frac{1}{p}}$, for all $p\geq\frac{p_{0}}{p_{0}-1}%
\approx2.18006$.
\end{theorem}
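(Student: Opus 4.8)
The plan is to prove the inequality, the two-sided estimate on $C_{p,\infty}$, and the sharpness assertion in three stages, writing $p^{\ast}=\frac{p}{p-1}$ throughout so that the left-hand exponents become $\frac{1}{2}\frac{p}{p-1}=\frac{p^{\ast}}{2}$ and $\frac{p-1}{p}=\frac{1}{p^{\ast}}$.

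First I would establish the upper bound $C_{p,\infty}\le \mathrm{A}_{p^{\ast}}$, which simultaneously yields the inequality itself. Fix a bilinear form $T$ and put $a_{ij}=T(e_i,e_j)$; by truncation it suffices to treat finitely supported matrices, say $a_{ij}=0$ for $i,j>n$, since the resulting constant is uniform and the left-hand side increases monotonically to its full value. Applying the Khintchine inequality \eqref{khin} with exponent $p^{\ast}$ to each row gives
\[
\left(\sum_{j=1}^{n}|a_{ij}|^{2}\right)^{\frac{p^{\ast}}{2}}\le \mathrm{A}_{p^{\ast}}^{\,p^{\ast}}\int_{0}^{1}\left|\sum_{j=1}^{n}a_{ij}r_{j}(t)\right|^{p^{\ast}}dt .
\]
Summing over $i$ and using Tonelli to interchange the (nonnegative) sum with the integral reduces everything to bounding $\sum_{i=1}^{n}\bigl|\sum_{j=1}^{n}a_{ij}r_{j}(t)\bigr|^{p^{\ast}}$ for each fixed $t$.

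The key observation — and the step I expect to demand the most care — is a duality identification. For fixed $t$ the vector $y(t)=(r_{1}(t),\dots,r_{n}(t),0,\dots)$ is a unit vector of $X_{\infty}=c_{0}$, and the functional $x\mapsto T(x,y(t))$ on $X_{p}$ has coefficients $b_{i}(t)=\sum_{j}a_{ij}r_{j}(t)$. Since $X_{p}^{\ast}=\ell_{p^{\ast}}$, the norm of this functional is exactly $\bigl(\sum_{i}|b_{i}(t)|^{p^{\ast}}\bigr)^{1/p^{\ast}}$, and it is bounded by $\|T\|\,\|y(t)\|_{c_{0}}=\|T\|$. Substituting $\sum_{i}|b_{i}(t)|^{p^{\ast}}\le\|T\|^{p^{\ast}}$ back into the integral and taking $p^{\ast}$-th roots yields precisely $C_{p,\infty}\le \mathrm{A}_{p^{\ast}}=\mathrm{A}_{\frac{p}{p-1}}$.

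For the lower bound I would test the inequality on the $2\times2$ form with matrix $\left(\begin{smallmatrix}1&1\\1&-1\end{smallmatrix}\right)$. A direct computation gives $T(x,y)=y_{1}(x_{1}+x_{2})+y_{2}(x_{1}-x_{2})$, whence $\|T\|=\sup_{\|x\|_{p}\le1}\bigl(|x_{1}+x_{2}|+|x_{1}-x_{2}|\bigr)=\sup_{\|x\|_{p}\le1}2\max(|x_{1}|,|x_{2}|)=2$, while the left-hand side equals $2^{\frac{1}{p^{\ast}}+\frac12}=2^{\frac32-\frac1p}$; dividing gives $C_{p,\infty}\ge 2^{\frac12-\frac1p}$. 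Finally, for sharpness I would invoke Haagerup's formula: because $\frac{1}{p^{\ast}}-\frac12=\frac12-\frac1p$, the identity $\mathrm{A}_{p^{\ast}}=2^{\frac{1}{p^{\ast}}-\frac12}=2^{\frac12-\frac1p}$ holds exactly when $p^{\ast}\le p_{0}$, i.e. when $\frac{p}{p-1}\le p_{0}$, which rearranges to $p\ge\frac{p_{0}}{p_{0}-1}\approx 2.18006$. In that range the upper and lower bounds coincide, forcing $C_{p,\infty}=2^{\frac12-\frac1p}$.
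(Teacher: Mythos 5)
Your proof is correct. The upper bound $C_{p,\infty}\le \mathrm{A}_{\frac{p}{p-1}}$ follows exactly the route of the paper's Theorem \ref{ppp}: Khintchine with exponent $\frac{p}{p-1}$ applied row-wise, Tonelli, and the identification of $\bigl(\sum_i|\sum_j a_{ij}r_j(t)|^{p^{\ast}}\bigr)^{1/p^{\ast}}$ with the $X_p$-dual norm of $T(\cdot,\sum_j r_j(t)e_j)$, which is at most $\Vert T\Vert$. Where you genuinely diverge is the lower bound: you test the inequality on the single $2\times 2$ matrix $\left(\begin{smallmatrix}1&1\\1&-1\end{smallmatrix}\right)$, and your computations ($\Vert T\Vert=2$ via $|x_1+x_2|+|x_1-x_2|=2\max(|x_1|,|x_2|)$, left-hand side $2^{\frac32-\frac1p}$) are correct, giving $C_{p,\infty}\ge 2^{\frac12-\frac1p}$ directly; combined with Haagerup's formula $\mathrm{A}_{p^{\ast}}=2^{\frac{1}{p^{\ast}}-\frac12}$ for $p^{\ast}\le p_0$, this closes the gap precisely on the range $p\ge\frac{p_0}{p_0-1}$ claimed in the statement. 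The paper instead reaches the lower bound by proving that the mixed Littlewood inequality implies the Khintchine inequality (Theorem \ref{implies}), using a $2^{n}\times n$ matrix that encodes all sign patterns $\delta^{(i)}$; that route is heavier, but it yields the stronger conclusion $C_{p,\infty}=\mathrm{A}_{\frac{p}{p-1}}$ for every $p\in[2,\infty]$, including $2\le p<\frac{p_0}{p_0-1}$, where your $2\times 2$ example is no longer extremal and only gives a strict lower bound. For the statement as given, your more elementary example is entirely sufficient.
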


The Hardy--Littlewood inequalities for bilinear forms in $\ell_{p}$ spaces
were proved in 1934 \cite{hardy}; the original proofs of Hardy and Littlewood
rely on a result proved by Littlewood that, in general terms, is none other
than the Kintchine inequality. The result, in modern mathematical notation,
given by Littlewood is the following:

\begin{theorem}
[\cite{LLL}, pag. 170]Let $0<\rho<\infty$ be a real number. Then
\[
\frac{\left(  2^{-N}\sum_{\beta\in\left\{  -1,1\right\}  ^{N}}\left\vert
\sum_{j=1}^{N}a_{j}\beta_{j}\right\vert ^{\rho}\right)  ^{\frac{1}{\rho}}%
}{\left(  \sum_{j=1}^{N}|a_{j}|^{2}\right)  ^{\frac{1}{2}}}%
\]
lies between two constants $A_{\rho}$ and $B_{\rho}$ (depending only on $\rho
$), whatever is the scalar sequence $(a_{j})_{j=1}^{N}\in\mathbb{K}^{N}$ and
whatever is the value of $N$.
\end{theorem}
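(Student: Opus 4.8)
The plan is to recognise that the numerator of the displayed ratio is nothing but the $L_{\rho}$-average of the Rademacher sum that already appears in the Khintchine inequality, so that the whole statement collapses to a comparison of two $L_{\rho}(I)$-norms, most of which can be read off from facts already at hand. First I would record the elementary identity
\[
2^{-N}\sum_{\beta\in\{-1,1\}^{N}}\left\vert \sum_{j=1}^{N}a_{j}\beta_{j}\right\vert^{\rho}=\int_{I}\left\vert \sum_{j=1}^{N}a_{j}r_{j}(t)\right\vert^{\rho}dt,
\]
which is immediate from the definition $r_{j}(t)=\operatorname{sign}(\sin 2^{j}\pi t)$: on $I$ the vector $(r_{1}(t),\dots,r_{N}(t))$ assumes each value $\beta\in\{-1,1\}^{N}$ on a dyadic subinterval of Lebesgue measure exactly $2^{-N}$, so the integral reduces to that weighted finite sum. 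Writing $f:=\sum_{j=1}^{N}a_{j}r_{j}$ and invoking the orthonormality relation \eqref{jo19} in the form $\|f\|_{L_{2}(I)}=\left(\sum_{j=1}^{N}|a_{j}|^{2}\right)^{1/2}$, the theorem becomes the assertion that $\|f\|_{L_{\rho}(I)}$ and $\|f\|_{L_{2}(I)}$ are comparable with constants depending only on $\rho$.

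Two of the four needed inequalities are free, because $I$ carries a probability measure and hence $\rho\mapsto\|f\|_{L_{\rho}(I)}$ is nondecreasing: for $\rho\ge 2$ this gives the lower estimate $\|f\|_{L_{\rho}}\ge\|f\|_{L_{2}}$ (so $A_{\rho}=1$ is admissible), and for $0<\rho\le 2$ it gives the upper estimate $\|f\|_{L_{\rho}}\le\|f\|_{L_{2}}$ (so $B_{\rho}=1$). The lower bound in the remaining range $0<\rho<2$ is precisely the Khintchine inequality stated above, which furnishes $A_{\rho}=\mathrm{A}_{\rho}^{-1}$. Thus the only genuinely new estimate to produce is the upper bound $\|f\|_{L_{\rho}}\le B_{\rho}\|f\|_{L_{2}}$ for $\rho>2$.

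For that bound I would exploit the \emph{independence} (not merely orthogonality) of the Rademacher functions through the exponential moment: for every $\lambda\in\mathbb{R}$, independence together with $\cosh x\le e^{x^{2}/2}$ yields
\[
\int_{I}e^{\lambda f(t)}\,dt=\prod_{j=1}^{N}\cosh(\lambda a_{j})\le\prod_{j=1}^{N}e^{\lambda^{2}a_{j}^{2}/2}=e^{\lambda^{2}\|f\|_{L_{2}}^{2}/2},
\]
so $f$ is subgaussian with variance proxy $\|f\|_{L_{2}}^{2}$. Optimising $\lambda$ in Markov's inequality gives a Gaussian tail bound for $|f|$, and integrating $\rho\int_{0}^{\infty}t^{\rho-1}\,\mathbb{P}(|f|>t)\,dt$ then produces $\|f\|_{L_{\rho}}\le B_{\rho}\|f\|_{L_{2}}$ with an explicit $B_{\rho}$ of order $\sqrt{\rho}$. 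For an even integer $\rho=2m$ one may instead expand $f^{2m}$, discard the terms whose Rademacher indices fail to pair up (these integrate to zero by \eqref{jo19}), and bound the surviving multinomial coefficients by $\left(\sum_{j}a_{j}^{2}\right)^{m}$; the general case follows by monotonicity in $\rho$.

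The main obstacle of the argument is concentrated entirely in this single subgaussian estimate, the only step that uses genuine stochastic independence, and it does double duty. Besides giving the large-$\rho$ upper bound directly, it also yields the small-$\rho$ lower bound \emph{without} appealing to Khintchine, via the log-convexity (interpolation) of the $L_{\rho}(I)$-norms: fixing any $r>2$ and choosing $\theta\in(0,1)$ with $\tfrac12=\tfrac{1-\theta}{\rho}+\tfrac{\theta}{r}$, one has $\|f\|_{L_{2}}\le\|f\|_{L_{\rho}}^{1-\theta}\|f\|_{L_{r}}^{\theta}$, and substituting $\|f\|_{L_{r}}\le B_{r}\|f\|_{L_{2}}$ and solving for $\|f\|_{L_{\rho}}$ gives the admissible lower constant $A_{\rho}=B_{r}^{-\theta/(1-\theta)}$. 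In short, once the exponential-moment bound is established, all four inequalities—and hence the full two-sided comparison asserted by the theorem—follow by monotonicity and interpolation.
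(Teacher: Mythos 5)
Your argument is correct, but you should be aware that the paper does not actually prove this statement: it is quoted (in modernized notation) from Littlewood's 1930 paper purely as historical context, with the remark that it is ``none other than the Khintchine inequality.'' Your opening identity --- that the discrete average over $\{-1,1\}^{N}$ equals $\int_{I}\bigl\vert\sum_{j}a_{j}r_{j}(t)\bigr\vert^{\rho}dt$ --- is exactly the $m=1$ case of Proposition \ref{multipopa} and of the computation in (\ref{este}), so that step matches machinery the paper uses elsewhere; from there the paper would simply invoke (\ref{khin}) for the lower bound when $\rho<2$ and monotonicity of $L_{\rho}$-norms on a probability space for the trivial halves, leaving $\rho>2$ to the cited literature. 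What your proposal adds is a genuinely self-contained treatment: the subgaussian estimate $\int_{I}e^{\lambda f}=\prod_{j}\cosh(\lambda a_{j})\le e^{\lambda^{2}\Vert f\Vert_{2}^{2}/2}$ supplies the upper constant $B_{\rho}$ for $\rho>2$, and the interpolation step $\Vert f\Vert_{2}\le\Vert f\Vert_{\rho}^{1-\theta}\Vert f\Vert_{r}^{\theta}$ recovers the lower constant for $\rho<2$ without assuming Khintchine --- which matters here, since the paper's whole point is that Littlewood's lemma and Khintchine's inequality are the same theorem, so quoting one to prove the other would be uninformative. Two small repairs: the vanishing of the unpaired terms in the $\rho=2m$ expansion follows from independence and $\int_{0}^{1}r_{j}\,dt=0$ (orthogonality of the Walsh products $\prod_{j}r_{j}^{\alpha_{j}}$), not from (\ref{jo19}) itself; and since the statement allows $\mathbb{K}=\mathbb{C}$, the $\cosh$ bound should be applied to $\operatorname{Re}f$ and $\operatorname{Im}f$ separately, at the cost of an absolute factor in $B_{\rho}$. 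Neither point affects the validity of the argument.
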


The Hardy--Littlewood inequalities consist in optimal extensions of
Littlewood's $4/3$ inequality \cite{LLL} (originally stated for $c_{0}$
spaces). In the last years the interest in this subject (which can be
considered part of the theory of multiple summing and absolutely summing
operators) was renewed and several authors became interested in this field
(\cite{abps, ara2, ap, mic, wast, pell, PT, pilar}).

Our first aim in this work is to show that the Khintchine inequality and the
mixed $\left(  \ell_{\frac{p}{p-1}},\ell_{2}\right)  $-Littlewood inequality
are equivalent; it means that one can be obtained from the other one. This
assertion not only works in the way we have stated, but as we will see later,
the multiple Khintchine inequality and the multilinear mixed $\left(
\ell_{\frac{p}{p-1}},\ell_{2}\right)  $-Littlewood inequality (see
\cite{racsam}) are also equivalent.

The Khintchine inequality and the multiple Khintchine inequality are very
useful tools in the theory of absolutely summing operators (even in its non
linear extensions) and related classical inequalities. We stress, for
instance, the striking advances in the estimates of the Hardy--Littlewood
constants (see \cite{ap}). Our second aim in this work is to estimate the
optimal constants in the multiple Khintchine inequality. We completely solve
this issue in the Section \ref{multi} and as application, in the final
section, we obtain the optimal constants of the multilinear mixed $\left(
\ell_{\frac{p}{p-1}},\ell_{2}\right)  $-Littlewood inequality, completing the
recent estimates in \cite{racsam, pell}.

\section{The Khintchine inequality is equivalent to the mixed $\left(
\ell_{\frac{p}{p-1}},\ell_{2}\right)  $-Littlewood inequality\label{classic}}

In this section we prove the equivalence between two classical inequalities
which have been mentioned on the last section. Moreover, we extract from this
equivalence the equality $C_{p,\infty}=\mathrm{A}_{\frac{p}{p-1}}$, for all
$p\in\lbrack2,\infty].$ This estimate complements, in the bilinear case, the
paper \cite{racsam}. We emphasize which the seminal ideas of this general
equivalency was given in the book \cite[Chapter 1]{blei}.

Let us start showing how a proof of the mixed $\left(  \ell_{\frac{p}{p-1}%
},\ell_{2}\right)  $-Littlewood inequality can be made from the Kintchine
inequality. This fact is known in this field and we shall include a short
proof (following the ideas of \cite[Theorem 1.2]{racsam}) for the sake of completeness.

Henceforth, for all $n\in\mathbb{N}$, $X_{p}^{n}$ will denote the finite
dimensional Banach space $\mathbb{R}^{n}$ endowed with the $\ell_{p}$ norm.

\begin{theorem}
[Mixed $\left(  \ell_{\frac{p}{p-1}},\ell_{2}\right)  $-Littlewood
inequality]\label{ppp}Let $p\geq2$. There is a constant $C_{p,\infty}$ such
that
\[
\left(  \sum_{i=1}^{\infty}\left(  \sum_{j=1}^{\infty}|T(e_{i},e_{j}%
)|^{2}\right)  ^{\frac{1}{2}\frac{p}{p-1}}\right)  ^{\frac{p-1}{p}}\leq
C_{p,\infty}\Vert T\Vert,
\]
for all continuous bilinear forms $T:X_{p}\times X_{\infty}\rightarrow
\mathbb{R}$. Moreover, $C_{p,\infty}\leq\mathrm{A}_{p^{\ast}}$.
\end{theorem}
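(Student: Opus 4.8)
The plan is to derive this inequality from the Khintchine inequality applied in the second variable, combined with Minkowski's integral inequality to switch the order of integration and summation. Let me think through the structure.

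We want to bound $\left(\sum_i (\sum_j |T(e_i,e_j)|^2)^{\frac{1}{2}\frac{p}{p-1}}\right)^{\frac{p-1}{p}}$ by $\|T\|$. Write $p^* = \frac{p}{p-1}$. The inner sum $\sum_j |T(e_i,e_j)|^2$ is the squared $\ell_2$-norm of the $i$-th "row" of the bilinear form.

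Here's my approach.

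First, fix $i$ and apply the Khintchine inequality to the row $(T(e_i,e_j))_j$, with the exponent $p^* = \frac{p}{p-1}$:
\begin{equation*}
\left(\sum_{j=1}^{n} |T(e_i,e_j)|^2\right)^{\frac{1}{2}} \leq \mathrm{A}_{p^*}\left(\int_0^1 \left|\sum_{j=1}^n T(e_i,e_j) r_j(t)\right|^{p^*} dt\right)^{\frac{1}{p^*}}.
\end{equation*}
Raising both sides to the power $p^*$ and summing over $i$ gives a bound on the left-hand quantity (to the power $p^*$) by $\mathrm{A}_{p^*}^{p^*} \sum_i \int_0^1 |\sum_j T(e_i,e_j) r_j(t)|^{p^*} dt$. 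Now for each fixed $t$, the signs $r_j(t) \in \{-1,1\}$, so the vector $(\sum_j T(e_i,e_j) r_j(t))_i$ is exactly the action of $T$ on $(e_i, y_t)$ where $y_t := \sum_j r_j(t) e_j \in X_\infty$ with $\|y_t\|_\infty = 1$.

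The next step is the crucial one: I want to recognize $\sum_i |T(e_i, y_t)|^{p^*}$ as a norm controlled by $\|T\|$. Since $T(\cdot, y_t)$ is a functional on $X_p$, its action on a vector $x \in X_p$ with $\|x\|_p \le 1$ is at most $\|T\|$ (as $\|y_t\|_\infty = 1$). By duality, $\left(\sum_i |T(e_i,y_t)|^{p^*}\right)^{1/p^*}$ equals the $\ell_{p^*}$-norm of the coefficient vector, which is the $X_p^* = X_{p^*}$ dual norm of the functional $x \mapsto T(x, y_t)$, hence bounded by $\|T\|$. Therefore $\sum_i \int_0^1 |\sum_j T(e_i,e_j)r_j(t)|^{p^*} dt \le \int_0^1 \|T\|^{p^*} dt = \|T\|^{p^*}$, and taking the $p^*$-th root yields exactly $C_{p,\infty} \le \mathrm{A}_{p^*}$. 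The main obstacle, and the step that needs the most care, is justifying this duality identification of $\left(\sum_i |T(e_i,y_t)|^{p^*}\right)^{1/p^*}$ with the norm of the functional $T(\cdot, y_t)$ on $X_p$; one must observe that since $\frac{1}{p}+\frac{1}{p^*}=1$, the dual of $X_p$ is isometrically $X_{p^*}$, so the $\ell_{p^*}$-norm of the coefficients is precisely the operator norm of the functional, giving the clean bound by $\|T\|$. The argument should first be carried out for finite $n$ (working in $X_p^n$), and then one passes to the limit $n \to \infty$ by monotone convergence, since all sums are of nonnegative terms.
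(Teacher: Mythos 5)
Your argument is correct and is essentially identical to the paper's own proof: Khintchine applied rowwise with exponent $p^{*}=\frac{p}{p-1}$, an interchange of sum and integral (which, as you note, is just Tonelli for nonnegative terms --- the Minkowski integral inequality mentioned in your plan is never actually needed), and the duality $(X_p^n)^{*}=X_{p^{*}}^n$ to bound $\bigl(\sum_i |T(e_i,y_t)|^{p^{*}}\bigr)^{1/p^{*}}$ by $\Vert T(\cdot,y_t)\Vert\leq\Vert T\Vert$. No gaps; the finite-$n$-then-limit remark matches the paper's setup as well.
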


\begin{proof}
Let $n$ be a positive integer and $T:X_{p}^{n}\times X_{\infty}^{n}%
\rightarrow\mathbb{R}$ be a bilinear form. Then, invoking the Khintchine
inequality, we have
\begin{align*}
\left(  \sum\limits_{i=1}^{n}\left(  \sum\limits_{j=1}^{n}\left\vert T\left(
e_{i},e_{j}\right)  \right\vert ^{2}\right)  ^{\frac{1}{2}\frac{p}{p-1}%
}\right)  ^{\frac{p-1}{p}}  &  \leq\mathrm{A}_{\frac{p}{p-1}}\left(
\sum\limits_{i=1}^{n}\int_{0}^{1}\left\vert \sum\limits_{j=1}^{n}%
r_{j}(t)T\left(  e_{i},e_{j}\right)  \right\vert ^{\frac{p}{p-1}}dt\right)
^{\frac{p-1}{p}}\\
&  =\mathrm{A}_{\frac{p}{p-1}}\left(  \int_{0}^{1}\sum\limits_{i=1}%
^{n}\left\vert T\left(  e_{i},\sum\limits_{j=1}^{n}r_{j}(t)e_{j}\right)
\right\vert ^{\frac{p}{p-1}}dt\right)  ^{\frac{p-1}{p}}\\
&  \leq\mathrm{A}_{\frac{p}{p-1}}\left(  \int_{0}^{1}\left\Vert T\left(
\cdot,\sum\limits_{j=1}^{n}r_{j}(t)e_{j}\right)  \right\Vert ^{\frac{p}{p-1}%
}dt\right)  ^{\frac{p-1}{p}}\\
&  \leq\mathrm{A}_{\frac{p}{p-1}}\sup_{t\in\lbrack0,1]}\left\Vert T\left(
\cdot,\sum\limits_{j=1}^{n}r_{j}(t)e_{j}\right)  \right\Vert \\
&  \leq\mathrm{A}_{\frac{p}{p-1}}\left\Vert T\right\Vert .
\end{align*}
As follows, the theorem is proved and $C_{p,\infty}\leq\mathrm{A}_{p^{\ast}}.$
\end{proof}

The following well-known lemma, credited to Hermann Minkowski, will be useful
along this paper (see \cite[Corollary 5.4.2]{garling}):\bigskip

\begin{lemma}
\label{mink.seq} For $0<p<q<+\infty$, and any sequence of scalars $\left(
a_{ij}\right)  _{i,j\in\mathbb{N}}$ we have
\[
\left(  \sum_{i}\left(  \sum_{j}|a_{ij}|^{p}\right)  ^{\frac{q}{p}}\right)
^{\frac{1}{q}}\leq\left(  \sum_{j}\left(  \sum_{i}|a_{ij}|^{q}\right)
^{\frac{p}{q}}\right)  ^{\frac{1}{p}}.
\]

\end{lemma}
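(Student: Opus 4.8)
The plan is to reduce this double inequality to the ordinary triangle inequality (Minkowski's inequality) for the $\ell_{r}$-norm, where I set $r:=q/p$. Since only the absolute values $|a_{ij}|$ enter the statement, I may assume without loss of generality that $a_{ij}\geq 0$, and I put $b_{ij}:=a_{ij}^{p}\geq 0$. Because $0<p<q$, the exponent $r=q/p$ satisfies $r>1$, which is precisely the range in which $\ell_{r}$ is a genuine normed space. First I would raise both sides of the claimed inequality to the $p$-th power; after the substitution this turns the asserted estimate into
\[
\left(  \sum_{i}\left(  \sum_{j}b_{ij}\right)  ^{r}\right)  ^{\frac{1}{r}}\leq\sum_{j}\left(  \sum_{i}b_{ij}^{r}\right)  ^{\frac{1}{r}}.
\]

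Next I would read this last inequality through the lens of the $\ell_{r}$-norm. For each fixed $j$, regard the sequence $v_{j}:=(b_{ij})_{i}$ as an element of $\ell_{r}$. Since the inner sum $\sum_{j}b_{ij}$ is nonnegative, no absolute value is lost, and the left-hand side above is exactly $\Vert \sum_{j}v_{j}\Vert_{\ell_{r}}$, while the right-hand side is $\sum_{j}\Vert v_{j}\Vert_{\ell_{r}}$. Thus the inequality to be proved is nothing but the triangle inequality $\Vert \sum_{j}v_{j}\Vert_{\ell_{r}}\leq\sum_{j}\Vert v_{j}\Vert_{\ell_{r}}$ in $\ell_{r}$, valid because $r\geq 1$. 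The only point carrying any content is this triangle inequality itself, and the cleanest route to it is through duality: writing $r^{\ast}$ for the conjugate index, one has $\Vert w\Vert_{\ell_{r}}=\sup\{\sum_{i}w_{i}u_{i}:\Vert u\Vert_{\ell_{r^{\ast}}}\leq 1\}$, and for each admissible $u$ one estimates $\sum_{i}(\sum_{j}b_{ij})u_{i}=\sum_{j}\sum_{i}b_{ij}u_{i}\leq\sum_{j}\Vert v_{j}\Vert_{\ell_{r}}$ by applying H\"older's inequality to each inner sum; taking the supremum over $u$ then yields the claim.

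To handle convergence for genuinely infinite sequences, I would first establish the inequality for finite index sets (the sums ranging over $1\leq i,j\leq N$), where every quantity is finite and the manipulations above are unambiguous, and then let $N\to\infty$, invoking monotone convergence since all terms are nonnegative; if the right-hand side is infinite there is nothing to prove. I expect no real obstacle here: the entire difficulty is bundled into the standard Minkowski inequality for $\ell_{r}$ with $r>1$, which I am free to quote from \cite[Corollary 5.4.2]{garling}, so the argument is essentially a bookkeeping reduction rather than a fresh estimate.
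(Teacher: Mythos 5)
Your proof is correct. The paper itself gives no proof of this lemma---it is stated as a well-known fact credited to Minkowski and simply referred to \cite[Corollary 5.4.2]{garling}---so there is no in-paper argument to compare against; your reduction (substituting $b_{ij}=a_{ij}^{p}$, raising both sides to the $p$-th power, and recognizing the result as the triangle inequality $\Vert\sum_{j}v_{j}\Vert_{\ell_{r}}\leq\sum_{j}\Vert v_{j}\Vert_{\ell_{r}}$ with $r=q/p>1$, established by duality and H\"older) is the standard route and is complete, including the monotone-convergence passage from finite to infinite index sets.
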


Now, let us recall that using Lemma \ref{mink.seq}, the mixed $\left(
\ell_{\frac{p}{p-1}},\ell_{2}\right)  $-Littlewood inequality implies the next result:

\begin{theorem}
\label{pppp} Let $p\geq2$. There is a constant $D_{p,\infty}$ such that
\[
\left(  \sum_{j=1}^{\infty}\left(  \sum_{i=1}^{\infty}|T(e_{i},e_{j}%
)|^{\frac{p}{p-1}}\right)  ^{2\frac{p-1}{p}}\right)  ^{\frac{1}{2}}\leq
D_{p,\infty}\Vert T\Vert,
\]
for all continuous bilinear forms $T:X_{p}\times X_{\infty}\rightarrow
\mathbb{R}$. Moreover, $D_{p,\infty}\leq C_{p,\infty}.$
\end{theorem}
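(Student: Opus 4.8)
The plan is to deduce Theorem \ref{pppp} directly from Theorem \ref{ppp} by invoking Minkowski's inequality (Lemma \ref{mink.seq}) to interchange the order of summation, so that the left-hand side of the desired inequality is dominated by the left-hand side of the mixed $\left(\ell_{\frac{p}{p-1}},\ell_2\right)$-Littlewood inequality. First I would fix a positive integer $n$ and a bilinear form $T:X_p^n\times X_\infty^n\to\mathbb{R}$, write $a_{ij}:=|T(e_i,e_j)|$, and set $s:=\frac{p}{p-1}=p^\ast$. Since $p\geq2$, one has $1<s\leq2$, which is the key numerical fact that will make the exponents fit Lemma \ref{mink.seq}.

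Next I would rewrite the two relevant quantities in terms of $s$. Using $\frac{p-1}{p}=\frac{1}{s}$, the left-hand side of Theorem \ref{pppp} becomes $\left(\sum_j\left(\sum_i a_{ij}^{\,s}\right)^{2/s}\right)^{1/2}$, while the left-hand side of Theorem \ref{ppp} becomes $\left(\sum_i\left(\sum_j a_{ij}^{\,2}\right)^{s/2}\right)^{1/s}$. The core step is to recognize that these two expressions are exactly the two sides of Lemma \ref{mink.seq}: applying that lemma (after relabeling the indices $i\leftrightarrow j$) with inner exponent $s$ and outer exponent $2$ — which is legitimate precisely because $s<2$ — gives
\[
\left(\sum_j\left(\sum_i a_{ij}^{\,s}\right)^{2/s}\right)^{1/2}\leq\left(\sum_i\left(\sum_j a_{ij}^{\,2}\right)^{s/2}\right)^{1/s}.
\]
Invoking Theorem \ref{ppp} on the right-hand side then bounds everything by $C_{p,\infty}\Vert T\Vert$, and letting $n\to\infty$ establishes the claim with $D_{p,\infty}\leq C_{p,\infty}$.

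The only point requiring care — and the closest thing to an obstacle — is the boundary case $p=2$, where $s=2$ and Lemma \ref{mink.seq}, which demands a \emph{strict} inequality between the exponents, does not apply directly. In that case, however, both displayed expressions collapse to $\left(\sum_{i,j}a_{ij}^{\,2}\right)^{1/2}$, so the intermediate inequality holds as an equality and the conclusion $D_{2,\infty}\leq C_{2,\infty}$ is immediate. Beyond this, the argument is purely a matter of matching the exponents $\frac{p}{p-1}$ and $2$ against the hypotheses of Minkowski's inequality; no analytic input is needed beyond Theorem \ref{ppp} itself.
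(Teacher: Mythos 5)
Your proposal is correct and follows essentially the same route as the paper: apply Lemma \ref{mink.seq} with exponents $\frac{p}{p-1}$ and $2$ to interchange the order of summation, then invoke Theorem \ref{ppp}. Your explicit handling of the boundary case $p=2$ (where the lemma's strict hypothesis $p<q$ fails but the two expressions coincide) is a small point of care that the paper's one-line proof glosses over.
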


\begin{proof}
It is enough to note that $\frac{p}{p-1}\leq2$, thus by Lemma \ref{mink.seq}
and the Theorem \ref{ppp} we obtain
\begin{align*}
\left(  \sum_{j=1}^{\infty}\left(  \sum_{i=1}^{\infty}|T(e_{i},e_{j}%
)|^{\frac{p}{p-1}}\right)  ^{2\frac{p-1}{p}}\right)  ^{\frac{1}{2}}  &
\leq\left(  \sum_{i=1}^{\infty}\left(  \sum_{j=1}^{\infty}|T(e_{i},e_{j}%
)|^{2}\right)  ^{\frac{1}{2}\frac{p}{p-1}}\right)  ^{\frac{p-1}{p}}\\
&  \leq C_{p,\infty}\Vert T\Vert.
\end{align*}
Then, the theorem is proved with the estimate $D_{p,\infty}\leq C_{p,\infty}.$
\end{proof}

\bigskip

In this way, we have proved that Khintchine inequality implies Theorem
\ref{ppp} as well as the Theorem \ref{ppp} implies Theorem \ref{pppp}.
Furthermore, $D_{p,\infty}\leq C_{p,\infty}\leq\mathrm{A}_{p^{\ast}}.$ Then,
if Theorem \ref{pppp} implies the Khintchine inequality, we will have that the
mixed $\left(  \ell_{\frac{p}{p-1}},\ell_{2}\right)  $-Littlewood inequality
and the Khintchine inequality are equivalent. That is the assertion in the following:

\begin{theorem}
\label{implies}Theorem (\ref{pppp}) implies Khintchine inequality (the case in
which $1\leq p\leq2$). Moreover, $\mathrm{A}_{p}\leq D_{p^{\ast},\infty}$.
\end{theorem}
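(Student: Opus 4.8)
The plan is to feed Theorem \ref{pppp} a bilinear form manufactured from the scalars of Khintchine's inequality, arranged so that the left-hand side of Theorem \ref{pppp} reproduces the $\ell_{2}$-norm while the norm of the form is controlled by the Rademacher integral. First I would normalize exponents: given $p\in[1,2]$, I apply Theorem \ref{pppp} (in its finite-dimensional version) with its parameter set equal to $p^{\ast}\geq 2$. Since $\tfrac{p^{\ast}}{p^{\ast}-1}=p$ and $2\tfrac{p^{\ast}-1}{p^{\ast}}=\tfrac{2}{p}$, that inequality becomes $\left(\sum_{j}\left(\sum_{i}|T(e_{i},e_{j})|^{p}\right)^{2/p}\right)^{1/2}\leq D_{p^{\ast},\infty}\|T\|$, where $i$ indexes the $X_{p^{\ast}}$-variable and $j$ the $X_{\infty}$-variable. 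I also record the dyadic form of the Rademacher integral: since $(r_{1},\dots,r_{n})$ is the constant sign vector $\varepsilon$ on each dyadic subinterval of $[0,1]$, one has $\int_{0}^{1}|\sum_{j=1}^{n}a_{j}r_{j}(t)|^{p}\,dt=2^{-n}\sum_{\varepsilon\in\{-1,1\}^{n}}|\sum_{j=1}^{n}a_{j}\varepsilon_{j}|^{p}$.

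Fix scalars $(a_{j})_{j=1}^{n}$ and define $T\colon X_{p^{\ast}}^{2^{n}}\times X_{\infty}^{n}\to\mathbb{R}$ by $T(e_{\varepsilon},e_{j})=2^{-n/p}a_{j}\varepsilon_{j}$, with $\varepsilon\in\{-1,1\}^{n}$ indexing the $X_{p^{\ast}}$-variable and $j\in\{1,\dots,n\}$ the $X_{\infty}$-variable. The left-hand side of Theorem \ref{pppp} is then immediate: for each fixed $j$ one gets $\sum_{\varepsilon}|T(e_{\varepsilon},e_{j})|^{p}=2^{-n}\cdot 2^{n}|a_{j}|^{p}=|a_{j}|^{p}$, so the whole expression collapses to $\left(\sum_{j=1}^{n}|a_{j}|^{2}\right)^{1/2}$, exactly the quantity that (\ref{khin}) seeks to bound.

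It remains to prove $\|T\|\leq\left(\int_{0}^{1}|\sum_{j=1}^{n}a_{j}r_{j}(t)|^{p}\,dt\right)^{1/p}$, and this is the heart of the matter. Taking the supremum over the unit ball of $X_{\infty}^{n}$ first collapses the $X_{\infty}$-variable into an $\ell_{1}$-sum, giving $\|T\|=\sup_{\|x\|_{p^{\ast}}\leq 1}2^{-n/p}\sum_{j=1}^{n}|a_{j}|\,\big|\sum_{\varepsilon}\varepsilon_{j}x_{\varepsilon}\big|$. For fixed $x$ I choose signs $s_{j}\in\{-1,1\}$ so that $|a_{j}|\,\big|\sum_{\varepsilon}\varepsilon_{j}x_{\varepsilon}\big|=s_{j}a_{j}\sum_{\varepsilon}\varepsilon_{j}x_{\varepsilon}$, interchange the two summations, and apply H\"older in the variable $\varepsilon$ with the conjugate pair $(p^{\ast},p)$; using $\|x\|_{p^{\ast}}\leq 1$ this bounds the sum by $\big(\sum_{\varepsilon}|\sum_{j}s_{j}a_{j}\varepsilon_{j}|^{p}\big)^{1/p}$. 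At this point the sign-invariance (\ref{equal}), in its discrete form $\sum_{\varepsilon}|\sum_{j}\delta_{j}a_{j}\varepsilon_{j}|^{p}=\sum_{\varepsilon}|\sum_{j}a_{j}\varepsilon_{j}|^{p}$ valid for every sign vector $\delta$, erases the $s_{j}$ and leaves precisely $2^{-n/p}\big(\sum_{\varepsilon}|\sum_{j}a_{j}\varepsilon_{j}|^{p}\big)^{1/p}$, the dyadic form of the Rademacher integral recorded above. I expect this norm estimate to be the main obstacle, not because any single step is hard but because the construction must make the two sides pull in seemingly opposite directions (an $\ell_{2}$-norm on the left, an averaged $L_{p}$-norm on the right) compatible; the sign-flip invariance is exactly what reconciles them.

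Combining the three facts, Theorem \ref{pppp} yields $\left(\sum_{j=1}^{n}|a_{j}|^{2}\right)^{1/2}\leq D_{p^{\ast},\infty}\|T\|\leq D_{p^{\ast},\infty}\left(\int_{0}^{1}|\sum_{j=1}^{n}a_{j}r_{j}(t)|^{p}\,dt\right)^{1/p}$. As $n$ and the scalars $(a_{j})$ were arbitrary, this is exactly Khintchine's inequality (\ref{khin}) for $1\leq p\leq 2$, with the constant estimate $\mathrm{A}_{p}\leq D_{p^{\ast},\infty}$, as claimed.
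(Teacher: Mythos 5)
Your proposal is correct and follows essentially the same route as the paper: the same $2^{n}\times n$ bilinear form on $X_{p^{\ast}}^{2^{n}}\times X_{\infty}^{n}$ with entries $2^{-n/p}a_{j}\varepsilon_{j}$, the same computation collapsing the left-hand side of Theorem \ref{pppp} to $\left(\sum_{j}|a_{j}|^{2}\right)^{1/2}$, and the same norm bound via H\"older in the $\varepsilon$-variable combined with the sign-invariance (\ref{equal}). The only cosmetic difference is that you take the supremum over the $X_{\infty}$-ball first (producing an $\ell_{1}$-sum and then optimal signs $s_{j}$), whereas the paper keeps the signs as an explicit supremum over $\{-1,1\}^{n}$; the underlying estimate is identical.
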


\begin{proof}
Let $p\in\left[  1,2\right]  $, $n\in\mathbb{N}$, and $\left(  a_{j}\right)
_{j=1}^{n}$ a scalar sequence.

By (\ref{equal}), we know that for any choice of signs $\left(  \varepsilon
_{j}\right)  _{j=1}^{n}\in\{-1,1\}^{n}$ we have
\[
\left(  \int_{0}^{1}\left\vert \sum_{j=1}^{n}a_{j}r_{j}(t)\right\vert
^{p}dt\right)  ^{\frac{1}{p}}=\left(  \int_{0}^{1}\left\vert \sum_{j=1}%
^{n}a_{j}r_{j}(t)\varepsilon_{j}\right\vert ^{p}dt\right)  ^{\frac{1}{p}}.
\]
Now, solving the integral on the right hand we obtain%
\begin{align}
\left(  \int_{0}^{1}\left\vert \sum_{j=1}^{n}a_{j}r_{j}(t)\right\vert
^{p}\right)  ^{\frac{1}{p}}  &  =\left(  \left(  \frac{1}{2}\right)  ^{n}%
\sum_{\beta\in\left\{  -1,1\right\}  ^{n}}\left\vert \sum_{j=1}^{n}a_{j}%
\beta_{j}\varepsilon_{j}\right\vert ^{p}\right)  ^{\frac{1}{p}}\label{este}\\
&  =\left(  \left(  \frac{1}{2}\right)  ^{n}\sum_{i=1}^{2^{n}}\left\vert
\sum_{j=1}^{n}a_{j}\delta_{j}^{\left(  i\right)  }\varepsilon_{j}\right\vert
^{p}\right)  ^{\frac{1}{p}}\nonumber
\end{align}
where each $\delta_{j}^{\left(  i\right)  }$ is $1$ or $-1$, for all $i,j$.

Hence, if $\left(  x_{i}\right)  _{i=1}^{\infty}\in B_{X_{p^{\ast}}}$, by
H\"{o}lder's inequality and (\ref{este}) we have
\begin{align}
\left\vert \left(  \frac{1}{2}\right)  ^{\frac{n}{p}}\sum_{i=1}^{2^{n}}%
\sum_{j=1}^{n}a_{j}\delta_{j}^{\left(  i\right)  }\varepsilon_{j}%
x_{i}\right\vert  &  \leq\left(  \left(  \frac{1}{2}\right)  ^{n}\sum
_{i=1}^{2^{n}}\left\vert \sum_{j=1}^{n}a_{j}\delta_{j}^{\left(  i\right)
}\varepsilon_{j}\right\vert ^{p}\right)  ^{\frac{1}{p}}\left(  \sum
_{i=1}^{2^{n}}\left\vert x_{i}\right\vert ^{p^{\ast}}\right)  ^{\frac
{1}{p^{\ast}}}\label{este2}\\
&  =\left(  \int_{0}^{1}\left\vert \sum_{j=1}^{n}a_{j}r_{j}(t)\right\vert
^{p}\right)  ^{\frac{1}{p}}\left\Vert \left(  x_{i}\right)  _{i=1}^{2^{n}%
}\right\Vert _{p^{\ast}}\nonumber\\
&  \leq\left(  \int_{0}^{1}\left\vert \sum_{j=1}^{n}a_{j}r_{j}(t)\right\vert
^{p}\right)  ^{\frac{1}{p}}.\nonumber
\end{align}
Now, define the bilinear form $A:X_{p^{\ast}}^{2^{n}}\times X_{\infty}%
^{n}\rightarrow\mathbb{R}$, such that $A\left(  e_{i},e_{j}\right)  =\left(
\frac{1}{2}\right)  ^{\frac{n}{p}}a_{j}\delta_{j}^{\left(  i\right)  }$, for
$1\leq j\leq n$, $1\leq i\leq2^{n}$. Clearly $A$ is bounded and thus
\begin{align*}
&  \left(  \sum_{j=1}^{n}\left\vert a_{j}\right\vert ^{2}\right)  ^{\frac
{1}{2}}\\
&  =\left(  \sum_{j=1}^{n}\left(  \sum_{i=1}^{2^{n}}\left\vert \left(
\frac{1}{2}\right)  ^{\frac{n}{p}}\delta_{j}^{\left(  i\right)  }%
a_{j}\right\vert ^{p}\right)  ^{\frac{2}{p}}\right)  ^{\frac{1}{2}}\\
&  =\left(  \sum_{j=1}^{n}\left(  \sum_{i=1}^{2^{n}}\left\vert A\left(
e_{i},e_{j}\right)  \right\vert ^{p}\right)  ^{\frac{2}{p}}\right)  ^{\frac
{1}{2}}\\
&  \leq D_{p^{\ast},\infty}\Vert A\Vert\\
&  =D_{p^{\ast},\infty}\sup\left\{  \left\vert \left(  \frac{1}{2}\right)
^{\frac{n}{p}}\sum_{i=1}^{2^{n}}\sum_{j=1}^{n}a_{j}\delta_{j}^{\left(
i\right)  }\varepsilon_{j}x_{i}\right\vert :\left\Vert x\right\Vert
_{X_{p^{\ast}}}\leq1,~\varepsilon_{j}\in\{-1,1\}\right\}  \\
&  \leq D_{p^{\ast},\infty}\left(  \int_{0}^{1}\left\vert \sum_{j=1}^{n}%
a_{j}r_{j}(t)\right\vert ^{p}\right)  ^{\frac{1}{p}},
\end{align*}
where, the last inequality stands by (\ref{este2}).

As follows, the theorem is proved and $\mathrm{A}_{p}\leq D_{p^{\ast},\infty}$.
\end{proof}

\bigskip

Note that we have proved that for all $p\in\left[  1,2\right]  $%
\[
D_{p^{\ast},\infty}=C_{p^{\ast},\infty}=\mathrm{A}_{p},
\]
or, equivalently, for all $p\in\left[  2,\infty\right]  $%
\[
D_{p,\infty}=C_{p,\infty}=\mathrm{A}_{\frac{p}{p-1}}.
\]

\bigskip

As announced, this recovers and completes the estimates of the papers
\cite{racsam, pell} for the bilinear case.

\section{\bigskip Optimal constants in the multiple Khintchine
inequality\label{multi}}

There are several extensions of the Khintchine inequality, some of them deal
with higher dimensions. In this section we will deal with a very important
extension, the so-called Khintchine inequality for multiple sums, or multiple
Khintchine inequality, (see \cite[pag. 455]{defloret} and the references therein):

\begin{theorem}
\label{multikhin}Let $0<r<\infty$, $m\in%
\mathbb{N}
$, and $\left(  y_{i_{1}...i_{m}}\right)  _{i_{1,}...,i_{m}=1}^{N}$ an array
of scalars. There is a constant $K_{m,r}\geq1$, such that
\begin{equation}
\left(  \sum_{i_{1},...,i_{m}=1}^{N}\left\vert y_{i_{1}...i_{m}}\right\vert
^{2}\right)  ^{\frac{1}{2}}\leq K_{m,r}\left(
{\displaystyle\int\limits_{I^{m}}}
\left\vert \sum_{i_{1},...,i_{m}=1}^{N}r_{i_{1}}\left(  t_{1}\right)
...r_{i_{m}}\left(  t_{m}\right)  y_{i_{1}...i_{m}}\right\vert ^{r}%
dt_{1}...dt_{m}\right)  ^{\frac{1}{r}} \label{popa}%
\end{equation}
for all $N\in%
\mathbb{N}
$, where $r_{i_{j}}\left(  t_{j}\right)  $ are denoting the Rademacher
functions, for all $j\in\left\{  1,...,m\right\}  $ and $i_{j}\in\left\{
1,...,N\right\}  .$ Moreover, $K_{m,r}\leq\left(  \mathrm{A}_{r}\right)  ^{m}$.
\end{theorem}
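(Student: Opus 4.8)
The plan is to argue by induction on the number of variables $m$, peeling off one Rademacher family at a time. The base case $m=1$ is exactly the classical Khintchine inequality, which gives $K_{1,r}\le \mathrm{A}_r$. For the inductive step I would assume the inequality holds in dimension $m-1$ with constant $K_{m-1,r}\le(\mathrm{A}_r)^{m-1}$ and deduce the $m$-dimensional statement with one extra factor $\mathrm{A}_r$, so that the recursion $K_{m,r}\le \mathrm{A}_r K_{m-1,r}$ closes the induction and yields $K_{m,r}\le(\mathrm{A}_r)^m$.

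First I would dispose of the easy range $r\ge 2$. Here $\mathrm{A}_r=1$, and since the products $r_{i_1}(t_1)\cdots r_{i_m}(t_m)$ form an orthonormal system in $L_2(I^m)$, the multivariate analogue of (\ref{jo19}) gives $\int_{I^m}|\sum y\, r\cdots r|^2=\sum|y_{i_1\dots i_m}|^2$. Monotonicity of the norms on the probability space $I^m$ then yields the inequality with $K_{m,r}=1=(\mathrm{A}_r)^m$, so no induction is needed. Thus I would concentrate on $0<r\le 2$.

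For $0<r\le 2$ the key manoeuvre is this: for each fixed multi-index $(i_2,\dots,i_m)$ apply the one-variable Khintchine inequality in the $i_1$ slot to the scalar sequence $(y_{i_1 i_2\dots i_m})_{i_1}$, bounding $(\sum_{i_1}|y|^2)^{1/2}$ by $\mathrm{A}_r(\int_0^1|\sum_{i_1}r_{i_1}(t_1)y|^r dt_1)^{1/r}$. Writing $z_{i_2\dots i_m}(t_1):=\sum_{i_1}r_{i_1}(t_1)y_{i_1\dots i_m}$ and summing the squares over $(i_2,\dots,i_m)$, I then have to interchange the $\ell_2$-sum in $(i_2,\dots,i_m)$ with the $L_r$-integral in $t_1$. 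This is carried out by Minkowski's integral inequality (the continuous companion of Lemma \ref{mink.seq}), which is available precisely because $2/r\ge 1$; it produces the bound $\mathrm{A}_r(\int_0^1(\sum_{i_2,\dots,i_m}|z_{i_2\dots i_m}(t_1)|^2)^{r/2}dt_1)^{1/r}$.

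Finally, for each fixed $t_1$ the inner quantity $\sum_{i_2,\dots,i_m}|z_{i_2\dots i_m}(t_1)|^2$ is the squared $\ell_2$-norm of the $(m-1)$-array $(z_{i_2\dots i_m}(t_1))$, so I would apply the inductive hypothesis to it. The resulting $(m-1)$-fold Rademacher sum $\sum_{i_2,\dots,i_m}r_{i_2}(t_2)\cdots r_{i_m}(t_m)z_{i_2\dots i_m}(t_1)$ collapses, upon substituting the definition of $z$, into the full $m$-fold sum $\sum_{i_1,\dots,i_m}r_{i_1}(t_1)\cdots r_{i_m}(t_m)y_{i_1\dots i_m}$; integrating in $t_1$ and invoking Fubini reassembles the integral over $I^m$. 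This delivers $K_{m,r}\le \mathrm{A}_r K_{m-1,r}$ and hence the desired $(\mathrm{A}_r)^m$. I expect the main obstacle to be the middle step: getting the norm interchange to go in the right direction (which forces the restriction $r\le 2$ and the separate, trivial treatment of $r\ge 2$) and verifying that the telescoping of the Rademacher products is exact, so that no stray constant is lost and the clean bound $(\mathrm{A}_r)^m$ survives.
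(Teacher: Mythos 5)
Your proposal is correct and follows essentially the same route as the paper: induction on $m$, with the one-variable Khintchine inequality and Minkowski's integral inequality (valid precisely because $2/r\geq 1$) combining to give the recursion $K_{m,r}\leq \mathrm{A}_{r}K_{m-1,r}$, and the range $r\geq 2$ disposed of via orthonormality of the Rademacher products together with monotonicity of the $L_{p}$-norms on a probability space. The only (immaterial) difference is the order of operations: the paper applies the inductive hypothesis to the indices $i_{2},\dots,i_{m}$ first and one-variable Khintchine in $i_{1}$ last, whereas you do the reverse.
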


In \cite{defloret} was given the details for $m=2$, but these arguments
contain all the elements of the general case. For the sake of completeness, we
give an elementary proof:

\begin{proof}
The proof will be obtained by induction in $m$. The case $m=1$ is exactly the
Khintchine inequality. Let us start the proof for the case $0<r\leq2$. Assume
inductively the result holds for $m-1$, then%
\begin{align}
&  \left(  \sum_{i_{1},...,i_{m}=1}^{N}\left\vert y_{i_{1}...i_{m}}\right\vert
^{2}\right)  ^{\frac{1}{2}}\nonumber\\
&  =\left(  \sum_{i_{1}=1}^{N}\left(  \sum_{i_{2},...,i_{m}=1}^{N}\left\vert
y_{i_{1}...i_{m}}\right\vert ^{2}\right)  ^{\frac{1}{2}2}\right)  ^{\frac
{1}{2}}\label{mink1}\\
&  \leq\left(  \mathrm{A}_{r}\right)  ^{m-1}\left(  \sum_{i_{1}=1}^{N}\left(
{\displaystyle\int\limits_{I^{m-1}}}
\left\vert \sum_{i_{2},...,i_{m}=1}^{N}r_{i_{2}}\left(  t_{2}\right)
...r_{i_{m}}\left(  t_{m}\right)  y_{i_{1}...i_{m}}\right\vert ^{r}%
dt_{2}...dt_{m}\right)  ^{\frac{2}{r}}\right)  ^{\frac{1}{2}}.\nonumber
\end{align}
Since $\frac{2}{r}\geq1$, using the Minkowski integral inequality we get%
\begin{align}
&  \left(
{\displaystyle\sum\limits_{i_{1}=1}^{N}}
\left(
{\displaystyle\int\limits_{I^{m-1}}}
\left\vert \sum_{i_{2},...,i_{m}=1}^{N}r_{i_{2}}\left(  t_{2}\right)
...r_{i_{m}}\left(  t_{m}\right)  y_{i_{1}...i_{m}}\right\vert ^{r}%
dt_{2},...,dt_{m}\right)  ^{\frac{2}{r}}\right)  ^{\frac{1}{2}}\label{mink2}\\
&  \leq\left[
{\displaystyle\int\limits_{I^{m-1}}}
\left(
{\displaystyle\sum\limits_{i_{1}=1}^{N}}
\left\vert \sum_{i_{2},...,i_{m}=1}^{N}r_{i_{2}}\left(  t_{2}\right)
...r_{i_{m}}\left(  t_{m}\right)  y_{i_{1}...i_{m}}\right\vert ^{r.\frac{2}%
{r}}\right)  ^{\frac{r}{2}}dt_{2}...dt_{m}\right]  ^{\frac{1}{r}}.\nonumber
\end{align}
From (\ref{mink1}) and (\ref{mink2}), we have%
\begin{align*}
&  \left(  \sum_{i_{1},...,i_{m}=1}^{N}\left\vert \left(  y_{i_{1}...i_{m}%
}\right)  \right\vert ^{2}\right)  ^{\frac{1}{2}}\\
&  \leq\left(  \mathrm{A}_{r}\right)  ^{m-1}\left[
{\displaystyle\int\limits_{I^{m-1}}}
\left(
{\displaystyle\sum\limits_{i_{1}=1}^{N}}
\left\vert \sum_{i_{2},...,i_{m}=1}^{N}r_{i_{2}}\left(  t_{2}\right)
...r_{i_{m}}\left(  t_{m}\right)  y_{i_{1}...i_{m}}\right\vert ^{2}\right)
^{\frac{r}{2}}dt_{2}...dt_{m}\right]  ^{\frac{1}{r}}.
\end{align*}
Now, Khintchine inequality furnishes us the inequality%
\begin{align*}
&  \left(
{\displaystyle\sum\limits_{i_{1}=1}^{N}}
\left\vert \sum_{i_{2},...,i_{m}=1}^{N}r_{i_{2}}\left(  t_{2}\right)
...r_{i_{m}}\left(  t_{m}\right)  y_{i_{1}...i_{m}}\right\vert ^{2}\right)
^{\frac{1}{2}}\\
&  \leq\mathrm{A}_{r}\left(
{\displaystyle\int\limits_{0}^{1}}
\left\vert
{\displaystyle\sum\limits_{i_{1}=1}^{N}}
r_{i_{1}}\left(  t_{1}\right)  \sum_{i_{2},...,i_{m}=1}^{N}r_{i_{2}}\left(
t_{2}\right)  ...r_{i_{m}}\left(  t_{m}\right)  y_{i_{1}...i_{m}}\right\vert
^{r}dt_{1}\right)  ^{\frac{1}{r}},
\end{align*}
thus%
\begin{align*}
&  \left(  \sum_{i_{1},...,i_{m}=1}^{N}\left\vert \left(  y_{i_{1}...i_{m}%
}\right)  \right\vert ^{2}\right)  ^{\frac{1}{2}}\\
&  \leq\left(  \mathrm{A}_{r}\right)  ^{m-1}\left[
{\displaystyle\int\limits_{I^{m-1}}}
\mathrm{A}_{r}\left(
{\displaystyle\int\limits_{0}^{1}}
\left\vert
{\displaystyle\sum\limits_{i_{1}=1}^{N}}
r_{i_{1}}\left(  t_{1}\right)  \sum_{i_{2},...,i_{m}=1}^{N}r_{i_{2}}\left(
t_{2}\right)  ...r_{i_{m}}\left(  t_{m}\right)  y_{i_{1}...i_{m}}\right\vert
^{r}dt_{1}\right)  ^{\frac{r}{r}}dt_{2}...dt_{m}\right]  ^{\frac{1}{r}}\\
&  =\left(  \mathrm{A}_{r}\right)  ^{m}\left[
{\displaystyle\int\limits_{I^{m-1}}}
\left(
{\displaystyle\int\limits_{0}^{1}}
\left\vert \sum_{i_{1},...,i_{m}=1}^{N}r_{i_{1}}\left(  t_{1}\right)
...r_{i_{m}}\left(  t_{m}\right)  y_{i_{1}...i_{m}}\right\vert ^{r}%
dt_{1}\right)  ^{\frac{r}{r}}dt_{2}...dt_{m}\right]  ^{\frac{1}{r}},
\end{align*}
and by Fubini's theorem%
\[
\left(  \sum_{i_{1},...,i_{m}=1}^{N}\left\vert \left(  y_{i_{1}...i_{m}%
}\right)  \right\vert ^{2}\right)  ^{\frac{1}{2}}\leq\left(  \mathrm{A}%
_{r}\right)  ^{m}\left[
{\displaystyle\int\limits_{I^{m}}}
\left\vert
{\displaystyle\sum\limits_{i_{1},...,i_{m}=1}^{N}}
r_{i_{1}}\left(  t_{1}\right)  r_{i_{2}}\left(  t_{2}\right)  ...r_{i_{m}%
}\left(  t_{m}\right)  y_{i_{1}...i_{m}}\right\vert ^{r}dt_{1}...dt_{m}%
\right]  ^{\frac{1}{r}}.
\]
On the other hand, since particularly we have proved
\[
\left(  \sum_{i_{1},...,i_{m}=1}^{N}\left\vert \left(  y_{i_{1}...i_{m}%
}\right)  \right\vert ^{2}\right)  ^{\frac{1}{2}}\leq\left[
{\displaystyle\int\limits_{I^{m}}}
\left\vert
{\displaystyle\sum\limits_{i_{1},...,i_{m}=1}^{N}}
r_{i_{1}}\left(  t_{1}\right)  r_{i_{2}}\left(  t_{2}\right)  ...r_{i_{m}%
}\left(  t_{m}\right)  y_{i_{1}...i_{m}}\right\vert ^{2}dt_{1}...dt_{m}%
\right]  ^{\frac{1}{2}},
\]
the assertion in the case $2<r<\infty$ follows trivially by the norm inclusion
between the $L_{p}$ spaces.
\end{proof}

Theorem \ref{multikhin} has important applications, for example, in multiple
summing operator theory. In fact, a frequent application of the Theorem
\ref{multikhin} can be traced in modern proofs of the Hardy--Littlewood
inequalities (see \cite{abps, ap, pell}).

Recently, Pellegrino \textit{et.al.} (see \cite[Proposition 1]{pel}) showed
that for all $m\in%
\mathbb{N}
$, $K_{m,1}=\left(  \sqrt{2}\right)  ^{m}=\left(  \mathrm{A}_{1}\right)  ^{m}%
$. Our goal in this section is to prove that, in fact, the optimal constants
$K_{m,r}$ are $\left(  \mathrm{A}_{r}\right)  ^{m}$, for all $m\in%
\mathbb{N}
$ and for all $0<r<\infty.$ In order to achieve it, we need an auxiliary
result proved in \cite[Proposition 1]{popa2}:

\begin{proposition}
\label{multipopa}Let $0<r<\infty$, $Z$ be a Banach space, $m,N_{1}%
,...,N_{m}\in%
\mathbb{N}
$, and $\left(  z_{i_{1}...i_{m}}\right)  _{i_{1,}...,i_{m}=1}^{N_{1}%
,...,N_{m}}\subset Z$. Then%
\begin{align*}
&  \left(
{\displaystyle\int\limits_{I^{m}}}
\left\Vert \sum_{i_{1},...,i_{m}=1}^{N_{1},...,N_{m}}r_{i_{1}}\left(
t_{1}\right)  ...r_{i_{m}}\left(  t_{m}\right)  z_{i_{1}...i_{m}}\right\Vert
^{r}dt_{1}...dt_{m}\right)  ^{\frac{1}{r}}\\
&  =\left(  \frac{1}{2^{N_{1}+\cdots+N_{m}}}\sum_{\left(  \eta^{\left(
1\right)  },...,\eta^{\left(  m\right)  }\right)  \in D_{N_{1}}\times
\cdots\times D_{N_{m}}}\left\Vert \sum_{i_{1},...,i_{m}=1}^{N_{1},...,N_{m}%
}\eta_{i_{1}}^{\left(  1\right)  },...,\eta_{i_{m}}^{\left(  m\right)
}z_{i_{1}...i_{m}}\right\Vert ^{r}\right)  ^{\frac{1}{r}},
\end{align*}
where $\eta^{\left(  j\right)  }=\left(  \eta_{1}^{\left(  j\right)
},...,\eta_{N_{j}}^{\left(  j\right)  }\right)  \in D_{N_{j}}$ for all
$j\in\left\{  1,...,m\right\}  $, and for all $n\in\mathbb{N}$ the set
$D_{n}:=\left\{  -1,1\right\}  ^{n}$.
\end{proposition}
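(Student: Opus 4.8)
The plan is to reduce the claimed identity to an elementary averaging property of the Rademacher system in a single variable, and then to tensorise it across the $m$ variables by Fubini's theorem.

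First I would isolate the one-variable fact that drives everything. For every $N \in \mathbb{N}$ the map $t \mapsto (r_1(t),\ldots,r_N(t))$ is constant on each of the $2^N$ dyadic intervals $\big((k-1)2^{-N}, k2^{-N}\big)$, $1 \le k \le 2^N$, and as $k$ runs through $\{1,\ldots,2^N\}$ these constant values run exactly once through all sign patterns in $D_N$. I would justify this from the definition $r_j(t) = \mathrm{sign}(\sin 2^j \pi t)$ via the relation $r_j(t) = (-1)^{b_j(t)}$, where $b_j(t)$ denotes the $j$-th binary digit of $t$: the first $N$ binary digits simultaneously fix the dyadic interval containing $t$ and biject onto $D_N$. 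This yields, for any nonnegative $g$ on $D_N$,
\[
\int_0^1 g\big(r_1(t),\ldots,r_N(t)\big)\,dt = \frac{1}{2^N}\sum_{\eta \in D_N} g(\eta),
\]
the dyadic rationals being a null set that can be ignored.

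Next I would apply this coordinate by coordinate. Writing $\Phi(t_1,\ldots,t_m)$ for the integrand $\big\|\sum_{i_1,\ldots,i_m} r_{i_1}(t_1)\cdots r_{i_m}(t_m)\, z_{i_1\ldots i_m}\big\|^r$, I note that for fixed $t_2,\ldots,t_m$ the dependence of $\Phi$ on $t_1$ passes only through the sign vector $(r_1(t_1),\ldots,r_{N_1}(t_1))$; replacing it by a pattern $\eta^{(1)} \in D_{N_1}$ turns each coefficient into $\eta^{(1)}_{i_1} r_{i_2}(t_2)\cdots r_{i_m}(t_m)$. The one-variable identity then converts $\int_0^1 (\cdot)\,dt_1$ into the average $2^{-N_1}\sum_{\eta^{(1)} \in D_{N_1}}(\cdot)$. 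Iterating over $t_2,\ldots,t_m$, with Fubini legitimising each interchange since $\Phi \ge 0$ is measurable, replaces every $\int_0^1 dt_j$ by $2^{-N_j}\sum_{\eta^{(j)} \in D_{N_j}}$, so that
\[
\int_{I^m}\Phi\,dt_1\cdots dt_m = \frac{1}{2^{N_1+\cdots+N_m}}\sum_{(\eta^{(1)},\ldots,\eta^{(m)}) \in D_{N_1}\times\cdots\times D_{N_m}}\Big\|\sum_{i_1,\ldots,i_m=1}^{N_1,\ldots,N_m}\eta^{(1)}_{i_1}\cdots\eta^{(m)}_{i_m}\, z_{i_1\ldots i_m}\Big\|^r.
\]
Taking $r$-th roots would then give precisely the asserted equality.

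I expect the only genuinely delicate step to be the one-variable enumeration claim, i.e.\ verifying cleanly that the first $N$ Rademacher functions realise each element of $D_N$ on a set of measure exactly $2^{-N}$; once the bijection with the binary digits is in place this is immediate, and the subsequent tensorisation is pure bookkeeping. Accordingly I would invest the care in that lemma and treat the Fubini iteration as routine.
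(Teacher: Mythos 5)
Your proposal is correct: the one-variable equidistribution fact (that $t\mapsto(r_1(t),\dots,r_N(t))$ takes each value of $D_N$ on a set of measure exactly $2^{-N}$, via the binary-digit bijection) combined with Fubini--Tonelli tensorisation over the $m$ coordinates is exactly the standard argument for this identity. Note that the paper itself gives no proof of Proposition~\ref{multipopa} --- it is quoted from Popa's work --- and your argument is the expected one, with the only point deserving care being the one you correctly flag, namely the exact enumeration of sign patterns by the first $N$ Rademacher functions.
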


Another result we will use, due to Haagerup in (\cite{Ha}), involving the
Central Limit Theorem is:

\begin{lemma}
\label{zentral}Let $r\in\left(  0,2\right)  $. For each $n\in%
\mathbb{N}
$ consider $\left(  a_{1},...,a_{n}\right)  =\frac{1}{\sqrt{n}}\left(
1,...,1\right)  $. Then
\[
\lim_{n\rightarrow\infty}\left(  \int_{0}^{1}\left\vert \sum_{j=1}^{n}%
a_{j}r_{j}(t)\right\vert ^{r}dt\right)  ^{\frac{1}{r}}=\sqrt{2}\left(
\frac{\Gamma\left(  \frac{r+1}{2}\right)  }{\sqrt{\pi}}\right)  ^{\frac{1}{r}%
}\text{,}%
\]
where $r_{j}\left(  t\right)  $ denotes the $j-$th Rademacher function for all
$j$.
\end{lemma}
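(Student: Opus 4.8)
The plan is to recognise the normalised sum as the canonical object of the classical Central Limit Theorem, and then to upgrade convergence in distribution to convergence of the $r$-th absolute moments. Writing $S_{n}(t):=\frac{1}{\sqrt{n}}\sum_{j=1}^{n}r_{j}(t)$, the probabilistic description of the Rademacher system recalled in the Introduction tells us that $(r_{j})_{j}$ is an i.i.d.\ sequence of symmetric $\pm1$ variables on $I=[0,1]$, each with mean $0$ and variance $1$. Hence $S_{n}$ is precisely a normalised partial sum of such variables, and by the Lindeberg--L\'evy Central Limit Theorem $S_{n}$ converges in distribution to a standard Gaussian $Z\sim N(0,1)$.

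The quantity under the limit is $\left(\int_{0}^{1}|S_{n}(t)|^{r}\,dt\right)^{1/r}=\left(\mathbb{E}|S_{n}|^{r}\right)^{1/r}$, so it suffices to prove $\mathbb{E}|S_{n}|^{r}\to\mathbb{E}|Z|^{r}$. Since $x\mapsto|x|^{r}$ is continuous but unbounded, convergence in distribution alone does not transfer to moments, and I would supply a uniform integrability argument. Exploiting $r<2$, I pick the strictly larger exponent $2$ and compute, using the orthonormality relation $(\ref{jo19})$, the uniform bound $\mathbb{E}|S_{n}|^{2}=\frac{1}{n}\sum_{j=1}^{n}\int_{0}^{1}r_{j}(t)^{2}\,dt=1$ for every $n$. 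A uniform bound on a moment of order $2>r$ forces the family $\{|S_{n}|^{r}\}_{n}$ to be uniformly integrable, which combined with the convergence $S_{n}\to Z$ in law yields the convergence of the $r$-th absolute moments.

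It then remains to evaluate $\mathbb{E}|Z|^{r}$. Using the standard Gaussian density and the substitution $u=x^{2}/2$, the integral $\frac{2}{\sqrt{2\pi}}\int_{0}^{\infty}x^{r}e^{-x^{2}/2}\,dx$ collapses to a Gamma function, giving $\mathbb{E}|Z|^{r}=2^{r/2}\,\Gamma\!\left(\frac{r+1}{2}\right)/\sqrt{\pi}$. Taking the $r$-th root produces exactly $\sqrt{2}\left(\Gamma\!\left(\frac{r+1}{2}\right)/\sqrt{\pi}\right)^{1/r}$, which is the claimed value of the limit.

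The only genuinely delicate step is the passage from convergence in distribution to convergence of the $r$-th moments: this is where I expect the main obstacle, and it is resolved precisely by the uniform-integrability criterion secured through the second-moment bound $\mathbb{E}|S_{n}|^{2}=1$. Everything else is a direct invocation of the Central Limit Theorem together with a routine Gamma-integral computation.
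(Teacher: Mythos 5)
Your proof is correct and follows the same route as the source the paper cites for this lemma (Haagerup's paper, which the authors explicitly describe as "involving the Central Limit Theorem"): the Lindeberg--L\'evy CLT gives $S_{n}\Rightarrow Z$, the uniform bound $\mathbb{E}|S_{n}|^{2}=1$ with $2>r$ yields uniform integrability of $\{|S_{n}|^{r}\}_{n}$ and hence convergence of the $r$-th absolute moments, and the Gamma-integral evaluation of $\mathbb{E}|Z|^{r}=2^{r/2}\Gamma\left(\frac{r+1}{2}\right)/\sqrt{\pi}$ is accurate. The paper itself states the lemma without proof, so your argument supplies exactly the standard justification it relies on.
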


\begin{theorem}
\label{multik}For all $0<r<\infty$ and $m\in%
\mathbb{N}
,$ the optimal constant $K_{m,r}$ in (\ref{popa}) is $\left(  \mathrm{A}%
_{r}\right)  ^{m}$.
\end{theorem}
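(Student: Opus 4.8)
The plan is to combine the upper bound already established in Theorem \ref{multikhin}, namely $K_{m,r}\le(\mathrm{A}_r)^m$, with a matching lower bound $K_{m,r}\ge(\mathrm{A}_r)^m$. Since $K_{m,r}$ is the smallest admissible constant in (\ref{popa}), it equals the supremum, over all finite arrays, of the ratio $(\sum|y_{i_1\dots i_m}|^2)^{1/2}/(\int_{I^m}|\sum r_{i_1}(t_1)\cdots r_{i_m}(t_m)y_{i_1\dots i_m}|^r\,dt_1\cdots dt_m)^{1/r}$, so it suffices to exhibit arrays whose ratio approaches $(\mathrm{A}_r)^m$.

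First I would reduce the $m$-dimensional extremal problem to the one-dimensional one by tensorization. Given a scalar sequence $(a_i)_{i=1}^n$, set $y_{i_1\dots i_m}:=a_{i_1}\cdots a_{i_m}$. The left-hand side factorizes as $(\sum_{i_1,\dots,i_m}|a_{i_1}|^2\cdots|a_{i_m}|^2)^{1/2}=(\sum_i|a_i|^2)^{m/2}$. On the right-hand side the Rademacher multisum splits as a product over the independent variables,
\[
\sum_{i_1,\dots,i_m=1}^n r_{i_1}(t_1)\cdots r_{i_m}(t_m)a_{i_1}\cdots a_{i_m}=\prod_{k=1}^m\Big(\sum_{i=1}^n a_i r_i(t_k)\Big),
\]
so $|\,\cdot\,|^r$ is itself a product and Fubini's theorem (or Proposition \ref{multipopa}) gives $\int_{I^m}|\cdots|^r\,dt_1\cdots dt_m=(\int_0^1|\sum_i a_i r_i(t)|^r\,dt)^m$. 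Hence the $m$-dimensional ratio is exactly the $m$-th power of the one-dimensional Khintchine ratio of $(a_i)$, and since $t\mapsto t^m$ is increasing on $[0,\infty)$, taking the supremum over all $(a_i)$ yields $K_{m,r}\ge(\sup_a\mathrm{ratio})^m=(\mathrm{A}_r)^m$.

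It then remains to verify that the one-dimensional ratio does reach $\mathrm{A}_r$, which I would do along the three ranges of $r$. For $r\ge2$ one has $\mathrm{A}_r=1$ and a single nonzero term already gives ratio $1$. For $0<r\le p_0$ the two-point sequence $(a_1,a_2)=\tfrac{1}{\sqrt2}(1,1)$ gives $\int_0^1|a_1 r_1+a_2 r_2|^r\,dt=2^{\,r/2-1}$, hence ratio $2^{1/r-1/2}=\mathrm{A}_r$ exactly. For $p_0<r<2$ I would take $a=\tfrac{1}{\sqrt n}(1,\dots,1)$ and let $n\to\infty$; Lemma \ref{zentral} gives $(\int_0^1|\sum_i a_i r_i|^r\,dt)^{1/r}\to\sqrt2(\Gamma(\tfrac{r+1}2)/\sqrt\pi)^{1/r}$, so the ratio tends to $\tfrac{1}{\sqrt2}(\Gamma(\tfrac{r+1}2)/\sqrt\pi)^{-1/r}$, which is precisely Haagerup's value of $\mathrm{A}_r$ on that interval.

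The conceptual crux, and the only genuinely nontrivial step, is the factorization making the extremal constant multiplicative under tensor products; once that is in hand the matching to $\mathrm{A}_r$ is routine bookkeeping split along the threshold $p_0$. The main technical care lies in the range $p_0<r<2$, where the extremizer is attained only asymptotically: there one must pass to the limit via the Central Limit computation of Lemma \ref{zentral} and check that its limit coincides with the closed form of $\mathrm{A}_r$ (the two-point sequence, exact for $r\le p_0$, becomes strictly suboptimal once $r>p_0$, which is why the cases must be separated).
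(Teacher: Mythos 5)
Your proposal is correct and follows essentially the same route as the paper: the paper's test arrays ($y_{i_1\dots i_m}=1$ on $\{1,2\}^m$ for $0<r\le p_0$, and $y_{i_1\dots i_m}=N^{-m/2}$ with $N\to\infty$ for $p_0<r<2$) are exactly the tensor powers of your one-dimensional extremizers, and the paper likewise factorizes the integral via Proposition \ref{multipopa} and invokes Lemma \ref{zentral} for the Central Limit case. The only cosmetic difference is that you state the tensorization principle in general before specializing, while the paper computes directly with the two explicit arrays.
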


\begin{proof}
From Theorem \ref{multikhin} we already know that $K_{m,r}\leq\left(
\mathrm{A}_{r}\right)  ^{m}$ for all $0<r<\infty$ and $m\in%
\mathbb{N}
$, so we only need to check the other inequality.

Obviously $K_{m,r}\geq\left(  \mathrm{A}_{r}\right)  ^{m}=\left(  1\right)
^{m}=1,$ for all $2\leq r<\infty$. Let us to prove the case $0<r<2$. Firstly,
let $0<r\leq p_{0}$, where $p_{0}$ is the number defined in (\ref{pezero}). We
start the proof by showing the case $m=2$, i.e., $K_{2,r}\geq\left(
2^{\frac{1}{r}-\frac{1}{2}}\right)  ^{2}=2^{\frac{2-r}{r}}$.

If we take%
\[
\left(  y_{ij}\right)  _{i,j=1}^{N}=\left(
\begin{array}
[c]{ccccc}%
1 & 1 & 0 & \cdots & 0\\
1 & 1 & 0 & \cdots & 0\\
0 & 0 & 0 & \cdots & 0\\
\vdots & \vdots & \vdots & \ddots & \vdots\\
0 & 0 & 0 & \cdots & 0
\end{array}
\right)  ,
\]
we have%
\[
\left(  \sum_{i,j=1}^{N}\left\vert \left(  y_{ij}\right)  \right\vert
^{2}\right)  ^{\frac{1}{2}}=\left(  4\right)  ^{\frac{1}{2}}=2.
\]
On the other side, from Proposition \ref{multipopa} we have%
\begin{align*}
&  \left(
{\displaystyle\int\limits_{I^{2}}}
\left\vert \sum_{i,j=1}^{N}r_{i}\left(  t\right)  r_{j}\left(  s\right)
y_{ij}\right\vert ^{r}dtds\right)  ^{\frac{1}{r}}\\
&  =\left(
{\displaystyle\int\limits_{I^{2}}}
\left\vert \sum_{i,j=1}^{2}r_{i}\left(  t\right)  r_{j}\left(  s\right)
\right\vert ^{r}dtds\right)  ^{\frac{1}{r}}\\
&  =\left(  \frac{1}{2^{4}}\sum_{\left(  \eta^{\left(  1\right)  }%
,\eta^{\left(  2\right)  }\right)  \in D_{N}\times D_{N}}\left\vert
\sum_{i,j=1}^{2}\eta_{i}^{\left(  1\right)  }\eta_{j}^{\left(  2\right)
}\right\vert ^{r}\right)  ^{\frac{1}{r}}\\
&  =\left(  \frac{1}{2^{4}}\left(  \sum_{\eta^{\left(  1\right)  }\in D_{N}%
}\left\vert \sum_{i=1}^{2}\eta_{i}^{\left(  1\right)  }\right\vert
^{r}\right)  \left(  \sum_{\eta^{\left(  2\right)  }\in D_{N}}\left\vert
\sum_{j=1}^{2}\eta_{j}^{\left(  2\right)  }\right\vert ^{r}\right)  \right)
^{\frac{1}{r}}\\
&  =\left(  2^{-4}\left(  2\cdot2^{r}\right)  \left(  2\cdot2^{r}\right)
\right)  ^{\frac{1}{r}}\\
&  =2^{\frac{2r-2}{r}}.
\end{align*}
In that case%
\begin{align*}
K_{2,r}  &  \geq\frac{\left(  \sum_{i,j=1}^{N}\left\vert \left(
y_{ij}\right)  \right\vert ^{2}\right)  ^{\frac{1}{2}}}{\left(
{\displaystyle\int\limits_{I^{2}}}
\left\vert \sum_{i,j=1}^{N}r_{i}\left(  t\right)  r_{j}\left(  s\right)
y_{ij}\right\vert ^{r}dtds\right)  ^{\frac{1}{r}}}\\
&  =\frac{2}{2^{^{\frac{2r-2}{r}}}}=2^{\frac{2-r}{r}}.
\end{align*}
For the general case $m$, we consider
\begin{equation}
y_{i_{1}...i_{m}}=\left\{
\begin{array}
[c]{c}%
1;i_{1},...,i_{m}\in\left\{  1,2\right\} \\
0;i_{1},...,i_{m}\in\left\{  3,N\right\}  .
\end{array}
\right.  \label{menor}%
\end{equation}
Then,
\[
\left(  \sum_{i_{1},...,i_{m}=1}^{N}\left\vert \left(  y_{i_{1}...i_{m}%
}\right)  \right\vert ^{2}\right)  ^{\frac{1}{2}}=2^{\frac{m}{2}}%
\]
and%
\[
\left(
{\displaystyle\int\limits_{I^{m}}}
\left\vert \sum_{i_{1},...,i_{m}=1}^{N}r_{i_{1}}\left(  t_{1}\right)
...r_{i_{m}}\left(  t_{m}\right)  y_{i_{1}...i_{m}}\right\vert ^{r}%
dt_{1}...dt_{m}\right)  ^{\frac{1}{r}}=2^{\frac{mr-m}{r}}.
\]
In this way%
\[
K_{m,r}\geq\frac{2^{\frac{m}{2}}}{2^{\frac{mr-m}{r}}}=\left(  2^{\frac
{2-r}{2r}}\right)  ^{m},
\]
i.e., the assertion is proved for $0<r\leq p_{0}$ and $m\in%
\mathbb{N}
$.

On the other hand, let $r\in\left(  p_{0},2\right)  $ and $m\in%
\mathbb{N}
$. For each $N\in%
\mathbb{N}
$ consider
\[
y_{i_{1}...i_{m}}=\left\{  \frac{1}{N^{\frac{m}{2}}};i_{1},...,i_{m}%
\in\left\{  1,N\right\}  \right\}  .
\]
We have%
\[
\left(  \sum_{i_{1},...,i_{m}=1}^{N}\left\vert \left(  y_{i_{1}...i_{m}%
}\right)  \right\vert ^{2}\right)  ^{\frac{1}{2}}=\left(  \sum_{i_{1}%
,...,i_{m}=1}^{N}\frac{1}{N^{m}}\right)  ^{\frac{1}{2}}=1,
\]
and, at the same time, from Proposition \ref{multipopa} we have
\begin{align*}
&
{\displaystyle\int\limits_{I^{m}}}
\left\vert \sum_{i_{1},...,i_{m}=1}^{N}r_{i_{1}}\left(  t_{1}\right)
...r_{i_{m}}\left(  t_{m}\right)  y_{i_{1}...i_{m}}\right\vert ^{r}%
dt_{1}...dt_{m}\\
&  =\frac{1}{2^{mN}}\sum_{\left(  \eta^{\left(  1\right)  },...,\eta^{\left(
m\right)  }\right)  \in D_{N}\times\cdots\times D_{N}}\left\vert \sum
_{i_{1},...,i_{m}=1}^{N}\frac{\eta_{i_{1}}^{\left(  1\right)  },...,\eta
_{i_{m}}^{\left(  m\right)  }}{N^{\frac{m}{2}}}\right\vert ^{r}\\
&  =\frac{1}{2^{mN}}\frac{1}{N^{\frac{mr}{2}}}\sum_{\left(  \eta^{\left(
1\right)  },...,\eta^{\left(  m\right)  }\right)  \in D_{N}\times\cdots\times
D_{N}}\left\vert \sum_{i_{1},...,i_{m}=1}^{N}\eta_{i_{1}}^{\left(  1\right)
},...,\eta_{i_{m}}^{\left(  m\right)  }\right\vert ^{r}\\
&  =\frac{1}{2^{mN}}\frac{1}{N^{\frac{mr}{2}}}\left(  \sum_{\eta^{\left(
1\right)  }\in D_{N}}\left\vert \sum_{i_{1}=1}^{N}\eta_{i_{1}}^{\left(
1\right)  }\right\vert ^{r}\right)  \cdot\cdots\cdot\left(  \sum
_{\eta^{\left(  m\right)  }\in D_{N}}\left\vert \sum_{i_{m}=1}^{N}\eta_{i_{m}%
}^{\left(  m\right)  }\right\vert ^{r}\right) \\
&  =\prod_{j=1}^{m}\left(  \frac{1}{2^{N}}\sum_{\eta^{\left(  j\right)  }\in
D_{N}}\left\vert \sum_{i_{j}=1}^{N}\eta_{i_{j}}^{\left(  j\right)  }\frac
{1}{N^{\frac{1}{2}}}\right\vert ^{r}\right) \\
&  =\prod_{j=1}^{m}\left(
{\displaystyle\int\limits_{I}}
\left\vert \sum_{i_{j}=1}^{N}r_{i_{j}}\left(  t_{j}\right)  a_{i_{j}%
}\right\vert ^{r}dt_{j}\right)
\end{align*}
where $\left(  a_{1},...,a_{N}\right)  =\frac{1}{\sqrt{N}}\left(
1,...,1\right)  $. Using the Lemma \ref{zentral}, we have%
\begin{align*}
&  \lim_{N\rightarrow\infty}\left(
{\displaystyle\int\limits_{I^{m}}}
\left\vert \sum_{i_{1},...,i_{m}=1}^{N}r_{i_{1}}\left(  t_{1}\right)
...r_{i_{m}}\left(  t_{m}\right)  y_{i_{1}...i_{m}}\right\vert ^{r}%
dt_{1}...dt_{m}\right)  ^{\frac{1}{r}}\\
&  =\lim_{N\rightarrow\infty}\prod_{j=1}^{m}\left(
{\displaystyle\int\limits_{I}}
\left\vert \sum_{i_{j}=1}^{N}r_{i_{j}}\left(  t_{j}\right)  a_{i_{j}%
}\right\vert ^{r}dt_{j}\right)  ^{\frac{1}{r}}\\
&  =\prod_{j=1}^{m}\lim_{N\rightarrow\infty}\left(
{\displaystyle\int\limits_{I}}
\left\vert \sum_{i_{j}=1}^{N}r_{i_{j}}\left(  t_{j}\right)  a_{i_{j}%
}\right\vert ^{r}dt_{j}\right)  ^{\frac{1}{r}}\\
&  =\prod_{j=1}^{m}\left(  \sqrt{2}\left(  \frac{\Gamma\left(  \frac{r+1}%
{2}\right)  }{\sqrt{\pi}}\right)  ^{\frac{1}{r}}\right) \\
&  =\left(  \mathrm{A}_{r}^{-1}\right)  ^{m}.
\end{align*}
Thus, asymptotically we obtain%
\[
K_{m,r}\geq\left(  \mathrm{A}_{r}\right)  ^{m},
\]
for $r\in\left(  p_{0},2\right)  $ and $m\in%
\mathbb{N}
$.
\end{proof}

\section{Application: Optimal constants in the multilinear mixed $\left(
\ell_{\frac{p}{p-1}},\ell_{2}\right)  $-Littlewood inequality}

The objective in this section is to prove that the multiple Khintchine
inequality is equivalent to the multilinear mixed $\left(  \ell_{\frac{p}%
{p-1}},\ell_{2}\right)  $-Littlewood inequality (see \cite{racsam}). As
application of the Section \ref{multi}, we obtain the optimal constants of the
multilinear mixed $\left(  \ell_{\frac{p}{p-1}},\ell_{2}\right)  $-Littlewood
inequality, recovering and ending completely the recent estimates obtained in
\cite{racsam, pell}.

Let us start this section using the Khintchine inequality for multiple sums in
order to prove the following result, also called mixed $\left(  \ell_{\frac
{p}{p-1}},\ell_{2}\right)  $-Littlewood inequality (or multilinear mixed
$\left(  \ell_{\frac{p}{p-1}},\ell_{2}\right)  $-Littlewood inequality) (see
\cite{racsam}). Mimicking the proof of Theorem \ref{ppp} we obtain:

\begin{theorem}
[Multilinear mixed $\left(  \ell_{\frac{p}{p-1}},\ell_{2}\right)  $-Littlewood
inequality]\label{multiracsam} Let $M\geq3$ be a positive integer and $p\geq
2$. There is an optimal constant $C_{(M),p}$ such that
\[
\left(  \sum_{i_{1}=1}^{\infty}\left(  \sum_{i_{2},...,i_{M}=1}^{\infty
}|T(e_{i_{1}},...,e_{i_{M}})|^{2}\right)  ^{\frac{1}{2}\frac{p}{p-1}}\right)
^{\frac{p-1}{p}}\leq C_{(M),p}\Vert T\Vert,
\]
for all continuous $M$-linear forms $T:X_{p}\times X_{\infty}\times\dots\times
X_{\infty}\rightarrow\mathbb{R}$. Moreover, $C_{(M),p}\leq\left(
\mathrm{A}_{p^{\ast}}\right)  ^{M-1}$.
\end{theorem}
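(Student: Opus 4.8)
The plan is to mimic the proof of Theorem \ref{ppp}, replacing the one-dimensional Khintchine inequality by its multiple version, Theorem \ref{multikhin}, applied to the $M-1$ indices $i_2,\dots,i_M$ that run over the $X_\infty$ slots. Throughout I would work with finite-dimensional forms $T\colon X_p^n\times X_\infty^n\times\cdots\times X_\infty^n\to\mathbb{R}$ and write $r:=p^\ast=\frac{p}{p-1}\le 2$, so that the outer exponents $\frac12\frac{p}{p-1}$ and $\frac{p-1}{p}$ become $\frac r2$ and $\frac1r$. The passage to the countable-index statement then follows by taking suprema over $n$, exactly as is implicit in Theorem \ref{ppp}.

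First I would fix $i_1$ and regard $\bigl(T(e_{i_1},e_{i_2},\dots,e_{i_M})\bigr)_{i_2,\dots,i_M=1}^n$ as an $(M-1)$-dimensional array of scalars. Applying Theorem \ref{multikhin} with $m=M-1$ and exponent $r$, and using $K_{M-1,r}\le(\mathrm{A}_r)^{M-1}$, gives for each $i_1$
\[
\Bigl(\sum_{i_2,\dots,i_M=1}^n|T(e_{i_1},\dots,e_{i_M})|^2\Bigr)^{1/2}\le (\mathrm{A}_r)^{M-1}\Bigl(\int_{I^{M-1}}\Bigl|\sum_{i_2,\dots,i_M=1}^n r_{i_2}(t_2)\cdots r_{i_M}(t_M)\,T(e_{i_1},\dots,e_{i_M})\Bigr|^r\,dt_2\cdots dt_M\Bigr)^{1/r}.
\]

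Next I would raise this to the power $r$, sum over $i_1$, and interchange sum and integral by Tonelli (all integrands are nonnegative and the sums are finite). By multilinearity of $T$ the inner sum equals $T\bigl(e_{i_1},v_2(t_2),\dots,v_M(t_M)\bigr)$, where $v_k(t_k):=\sum_{i_k=1}^n r_{i_k}(t_k)e_{i_k}$. For fixed $t_2,\dots,t_M$ the map $T(\cdot,v_2(t_2),\dots,v_M(t_M))$ is linear on $X_p^n$, so, since the dual of $X_p$ is $X_{p^\ast}=\ell_r$,
\[
\Bigl(\sum_{i_1=1}^n|T(e_{i_1},v_2(t_2),\dots,v_M(t_M))|^r\Bigr)^{1/r}=\bigl\|T(\cdot,v_2(t_2),\dots,v_M(t_M))\bigr\|.
\]
Each $v_k(t_k)$ has all coordinates equal to $\pm1$, hence $\|v_k(t_k)\|_{X_\infty}=1$, whence $\|T(\cdot,v_2(t_2),\dots,v_M(t_M))\|\le\|T\|$ uniformly in $t_2,\dots,t_M$. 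Integrating the constant bound $\|T\|^r$ over $I^{M-1}$ and taking the $r$-th root yields $\left(\sum_{i_1}(\cdots)^{r/2}\right)^{1/r}\le(\mathrm{A}_r)^{M-1}\|T\|=(\mathrm{A}_{p^\ast})^{M-1}\|T\|$, which is the claim with $C_{(M),p}\le(\mathrm{A}_{p^\ast})^{M-1}$.

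I do not anticipate a serious obstacle, since the argument is a direct lift of Theorem \ref{ppp}. The only points requiring care are, first, the choice to apply the multiple Khintchine inequality to the $M-1$ copies of $X_\infty$ (so that the sign vectors $v_k$ have unit $\ell_\infty$-norm) while keeping the single $X_p$ slot free for the duality identity, which is what forces the exponent $r=p^\ast$ to match; and second, the bookkeeping of exponents, namely verifying that $\tfrac12\tfrac{p}{p-1}=\tfrac r2$ and $\tfrac{p-1}{p}=\tfrac1r$ so that the left-hand side is genuinely $\bigl(\sum_{i_1}S_{i_1}^{\,r}\bigr)^{1/r}$ with $S_{i_1}$ the inner $\ell_2$-norm. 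Note that the statement asks only for existence of the best constant $C_{(M),p}$ together with the upper bound $(\mathrm{A}_{p^\ast})^{M-1}$; the matching lower bound (optimality) is a separate matter, presumably deduced later from Theorem \ref{multik} via the announced equivalence.
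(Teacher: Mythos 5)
Your proposal is correct and follows essentially the same route as the paper: the authors likewise fix $i_{1}$, apply Theorem \ref{multikhin} with exponent $\frac{p}{p-1}$ to the $(M-1)$-dimensional array in the $X_{\infty}$ slots, interchange sum and integral, identify the $\ell_{p^{\ast}}$-sum over $i_{1}$ with the norm of the linear form $T(\cdot,v_{2}(t_{2}),\dots,v_{M}(t_{M}))$, and conclude via $\Vert v_{k}(t_{k})\Vert_{X_{\infty}}=1$. Your closing remark is also accurate: the paper's proof only establishes the upper bound $C_{(M),p}\leq(\mathrm{A}_{p^{\ast}})^{M-1}$, with optimality deferred to the equivalence argument in the final section.
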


\begin{proof}
Let $N$ be a positive integer and $T:X_{p}^{N}\times X_{\infty}^{N}\times
\dots\times X_{\infty}^{N}\rightarrow\mathbb{R}$ be a continuous $M$-linear
form. By the Theorem \ref{multikhin} we know that%

\begin{align*}
&  \left(  \sum_{i_{1}=1}^{N}\left(  \sum_{i_{2},...,i_{M}=1}^{N}|T(e_{i_{1}%
},...,e_{i_{M}})|^{2}\right)  ^{\frac{1}{2}\frac{p}{p-1}}\right)  ^{\frac
{p-1}{p}}\\
&  \quad\leq(\mathrm{A}_{\frac{p}{p-1}})^{M-1}\left(  \sum_{i_{1}=1}^{N}%
\int_{[0,1]^{M-1}}\left\vert \sum_{i_{2},...,i_{M}}^{N}r_{i_{2}}(t_{2})\cdots
r_{i_{M}}(t_{M})T(e_{i_{1}},...,e_{i_{M}})\right\vert ^{\frac{p}{p-1}}%
dt_{2}\cdots dt_{M}\right)  ^{\frac{p-1}{p}}\\
&  \quad=(\mathrm{A}_{\frac{p}{p-1}})^{M-1}\left(  \int_{[0,1]^{M-1}}%
\sum_{i_{1}=1}^{N}\left\vert T\left(  e_{i_{1}},\sum_{i_{2}=1}^{N}r_{i_{2}%
}(t_{2})e_{i_{2}},...,\sum_{i_{M}=1}^{N}r_{i_{M}}(t_{M})e_{i_{M}}\right)
\right\vert ^{\frac{p}{p-1}}dt_{2}\cdots dt_{M}\right)  ^{\frac{p-1}{p}}\\
&  \quad\leq(\mathrm{A}_{\frac{p}{p-1}})^{M-1}\left(  \int_{[0,1]^{M-1}%
}\left\Vert T\left(  \cdot,\sum_{i_{2}=1}^{N}r_{i_{2}}(t_{2})e_{i_{2}%
},...,\sum_{i_{M}=1}^{N}r_{i_{M}}(t_{M})e_{i_{M}}\right)  \right\Vert
^{\frac{p}{p-1}}dt_{2}\cdots dt_{M}\right)  ^{\frac{p-1}{p}}\\
&  \quad\leq(\mathrm{A}_{\frac{p}{p-1}})^{M-1}\sup_{t_{2},...,t_{M}\in
\lbrack0,1]}\left\Vert T\left(  \cdot,\sum_{i_{2}=1}^{N}r_{i_{2}}%
(t_{2})e_{i_{2}},...,\sum_{i_{M}=1}^{N}r_{i_{M}}(t_{M})e_{i_{M}}\right)
\right\Vert \\
&  \quad\leq(\mathrm{A}_{\frac{p}{p-1}})^{M-1}\Vert T\Vert,
\end{align*}
and the proof is done, with estimates $C_{(M),p}\leq\left(  \mathrm{A}%
_{\frac{p}{p-1}}\right)  ^{M-1}$.
\end{proof}

In \cite{racsam}, the authors obtain the estimate $\left(  2^{\frac{1}%
{2}-\frac{1}{p}}\right)  ^{M-1}\leq C_{(M),p}\leq\left(  \mathrm{A}_{\frac
{p}{p-1}}\right)  ^{M-1}$, for all $p\geq2$. Particularly, for all $p\geq
\frac{p_{0}}{p_{0}-1}\approx2.18006$, $C_{(M),p}=\left(  \mathrm{A}_{\frac
{p}{p-1}}\right)  ^{M-1}.$

\medskip It occurs to us to ask about the relation between the Theorem
\ref{multikhin} and Theorem \ref{multiracsam}. We are going to see that, as
occurs in the classical inequalities in the Section \ref{classic}, these two
theorems are also equivalent. Moreover, we will prove that for all $p\geq2$,
$C_{(M),p}=\left(  \mathrm{A}_{\frac{p}{p-1}}\right)  ^{M-1},$ recovering and
completing the estimates of the papers \cite{racsam, pell} for the multilinear case.

We saw that multiple Khintchine inequality implies Theorem \ref{multiracsam}.
On the other hand, proceeding as in Theorem \ref{pppp} and using Lemma
\ref{mink.seq} several times,\ from Theorem \ref{multiracsam} it is not
difficult to prove that in fact we have:

\begin{theorem}
\label{multibom}Let $M\geq3$ be a positive integer and $p\geq2$. There is an
optimal constant $D_{(M),p}$ such that
\[
\left(  \sum_{i_{2},...,i_{M}=1}^{\infty}\left(  \sum_{i_{1}=1}^{\infty
}|T(e_{i_{1}},...,e_{i_{M}})|^{\frac{p}{p-1}}\right)  ^{2\frac{p-1}{p}%
}\right)  ^{\frac{1}{2}}\leq D_{(M),p}\Vert T\Vert,
\]
for all continuous $M$-linear forms $T:X_{p}\times X_{\infty}\times\dots\times
X_{\infty}\rightarrow\mathbb{R}$. Moreover, $D_{(M),p}\leq C_{(M),p}$.
\end{theorem}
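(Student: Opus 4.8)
The plan is to mimic the proof of Theorem \ref{pppp}, where the bilinear mixed Littlewood inequality was shown to imply its ``reordered'' version by a single application of Minkowski's inequality (Lemma \ref{mink.seq}). Here the summation structure involves $M-1$ nested inner sums rather than one, so I expect to invoke Lemma \ref{mink.seq} repeatedly, once for each of the indices $i_2,\dots,i_M$, to migrate them from the inner position (with the $\ell_2$ exponent) to the outer position (with the $\ell_{p/(p-1)}$ exponent).

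First I would observe that $\frac{p}{p-1}\leq 2$ for all $p\geq 2$, so that the hypothesis $0<p<q<\infty$ of Lemma \ref{mink.seq} is met with the inner exponent $\frac{p}{p-1}$ and the outer exponent $2$. Starting from the left-hand side of Theorem \ref{multibom}, I would apply the Minkowski lemma successively to swap each of the indices $i_2,\dots,i_M$ past the index $i_1$, at each step exchanging the order of an inner $\ell_{p/(p-1)}$-sum and the outer $\ell_2$-sum. After all $M-1$ applications, the expression is bounded above by
\[
\left(  \sum_{i_{1}=1}^{\infty}\left(  \sum_{i_{2},...,i_{M}=1}^{\infty
}|T(e_{i_{1}},...,e_{i_{M}})|^{2}\right)  ^{\frac{1}{2}\frac{p}{p-1}}\right)  ^{\frac{p-1}{p}},
\]
which is exactly the left-hand side of Theorem \ref{multiracsam}. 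Applying Theorem \ref{multiracsam} then yields the bound $C_{(M),p}\Vert T\Vert$, giving the estimate $D_{(M),p}\leq C_{(M),p}$.

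The routine bookkeeping is in verifying that each single use of Lemma \ref{mink.seq} indeed moves one index to the correct position without disturbing the exponents attached to the indices not yet processed; grouping the remaining summation indices into a single composite index at each stage keeps the lemma applicable in its stated two-index form. The main obstacle I anticipate is purely notational: ensuring the exponents are tracked correctly through the chain of inequalities so that the inner and outer powers at the end match Theorem \ref{multiracsam} precisely, and confirming that the intermediate quantities remain finite so the inequalities compose legitimately. There is no genuine analytic difficulty beyond the repeated application of a known lemma, which is why the statement records this as ``not difficult to prove.''
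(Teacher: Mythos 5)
Your proposal is correct and is exactly the route the paper indicates: it derives Theorem \ref{multibom} from Theorem \ref{multiracsam} by Lemma \ref{mink.seq}, just as Theorem \ref{pppp} was derived from Theorem \ref{ppp} (the paper gives no further detail, saying only that one proceeds ``as in Theorem \ref{pppp}'' using the lemma several times). The only simplification worth noting is that since all of $i_{2},\dots,i_{M}$ carry the same exponent at the same level, grouping them into a single composite index lets one application of Lemma \ref{mink.seq} with exponents $\frac{p}{p-1}\leq 2$ pass directly from the left-hand side of Theorem \ref{multibom} to that of Theorem \ref{multiracsam}, so the $M-1$ successive swaps are not actually needed.
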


Our goal is to prove that this theorem implies the multiple Khintchine
inequality. In order to achieve it, let us recall that for a continuous
$m-$linear form $T:X_{p_{1}}\times\cdots\times X_{p_{m}}\rightarrow
\mathbb{R},$%
\begin{align*}
\left\Vert T\right\Vert  &  =\sup\left\{  \left\vert T\left(  x^{\left(
1\right)  },...,x^{\left(  m\right)  }\right)  \right\vert :\left\Vert
x^{\left(  i\right)  }\right\Vert _{X_{p_{i}}}\leq1;~1\leq i\leq m\right\} \\
&  =\sup\left\{  \left\vert \sum_{i_{1},...,i_{m}}a_{i_{1}\cdots i_{m}%
}x_{i_{1}}^{\left(  1\right)  }...x_{i_{m}}^{\left(  m\right)  }\right\vert
:\left\Vert x^{\left(  i\right)  }\right\Vert _{X_{p_{i}}}\leq1;~1\leq i\leq
m\right\}  ,
\end{align*}
where $p_{1},...,p_{m}\in\lbrack1,\infty]$ and $T\left(  e_{i_{1}%
},...,e_{i_{m}}\right)  =a_{i_{1}\cdots i_{m}}$, for all $i_{j}\in\mathbb{N}.$

\begin{theorem}
Theorem \ref{multibom} implies multiple Khintchine inequality for $p\in
\lbrack1,2]$. Moreover, $\left(  \mathrm{A}_{p}\right)  ^{m-1}\leq
D_{(m),p^{\ast}}$.
\end{theorem}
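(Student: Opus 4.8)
The plan is to reverse-engineer the multiple Khintchine inequality out of Theorem~\ref{multibom}, mimicking the bilinear argument of Theorem~\ref{implies}. I would fix $p\in[1,2]$, an integer $N$, and a scalar array $\left(a_{i_{1}\cdots i_{m-1}}\right)_{i_{1},\dots,i_{m-1}=1}^{N}$, playing the role of the object whose $\ell_{2}$-norm we wish to dominate. Invoking Proposition~\ref{multipopa}, I would first rewrite the $(m-1)$-fold Rademacher integral of these scalars as a normalized average over sign vectors $\left(\eta^{(1)},\dots,\eta^{(m-1)}\right)\in D_{N}\times\cdots\times D_{N}$; enumerating the $2^{N(m-1)}$ sign tuples by a single index $k$ running from $1$ to $2^{N(m-1)}$ gives the discrete identity
\[
\left(\int_{I^{m-1}}\left\vert\sum_{i_{1},\dots,i_{m-1}=1}^{N}a_{i_{1}\cdots i_{m-1}}r_{i_{1}}(t_{1})\cdots r_{i_{m-1}}(t_{m-1})\right\vert^{p}dt_{1}\cdots dt_{m-1}\right)^{\frac{1}{p}}=\left(\left(\tfrac{1}{2}\right)^{N(m-1)}\sum_{k=1}^{2^{N(m-1)}}\left\vert\sum_{\mathbf{i}}a_{\mathbf{i}}\,\delta_{\mathbf{i}}^{(k)}\right\vert^{p}\right)^{\frac{1}{p}},
\]
where each $\delta_{\mathbf{i}}^{(k)}$ is the product of the corresponding signs and thus takes values in $\{-1,1\}$.

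Next I would build an $M$-linear form $A$ with $M=m$ to feed into Theorem~\ref{multibom}. The natural choice, paralleling Theorem~\ref{implies}, is to let the first slot be indexed by $k\in\{1,\dots,2^{N(m-1)}\}$ and absorb the averaging factor into the coefficients, setting $A(e_{k},e_{i_{1}},\dots,e_{i_{m-1}})=\left(\tfrac{1}{2}\right)^{\frac{N(m-1)}{p}}a_{i_{1}\cdots i_{m-1}}\delta_{\mathbf{i}}^{(k)}$, so that $A\colon X_{p^{\ast}}^{2^{N(m-1)}}\times X_{\infty}^{N}\times\cdots\times X_{\infty}^{N}\to\mathbb{R}$. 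The key estimate to verify is the norm bound $\left\Vert A\right\Vert\leq\left(\int_{I^{m-1}}\left\vert\sum a_{\mathbf{i}}r_{i_{1}}\cdots r_{i_{m-1}}\right\vert^{p}\right)^{1/p}$; I would prove this by pairing against $(x_{k})\in B_{X_{p^{\ast}}}$ and arbitrary signs $\varepsilon_{i_{j}}^{(j)}\in\{-1,1\}$ in the remaining slots, applying H\"older's inequality in the $k$-variable exactly as in display~(\ref{este2}), and using the sign-invariance identity~(\ref{equalm}) to remove the $\varepsilon$'s without changing the integral.

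Finally, I would compute the left-hand side of the inequality in Theorem~\ref{multibom} applied to $A$. The inner sum over the first index $k$ of $\left\vert A(e_{k},\dots)\right\vert^{\frac{p^{\ast}}{p^{\ast}-1}}=\left\vert A(e_{k},\dots)\right\vert^{p}$ collapses, by the same normalization, to $\left\vert a_{i_{1}\cdots i_{m-1}}\right\vert^{p}$ summed appropriately, and after raising to the reciprocal powers the outer $\ell_{2}$ structure reproduces exactly $\left(\sum_{i_{1},\dots,i_{m-1}}\left\vert a_{\mathbf{i}}\right\vert^{2}\right)^{1/2}$. Combining this with the norm bound and the constant $D_{(m),p^{\ast}}$ from Theorem~\ref{multibom} yields
\[
\left(\sum_{i_{1},\dots,i_{m-1}=1}^{N}\left\vert a_{i_{1}\cdots i_{m-1}}\right\vert^{2}\right)^{\frac{1}{2}}\leq D_{(m),p^{\ast}}\left(\int_{I^{m-1}}\left\vert\sum_{\mathbf{i}}a_{\mathbf{i}}r_{i_{1}}(t_{1})\cdots r_{i_{m-1}}(t_{m-1})\right\vert^{p}dt_{1}\cdots dt_{m-1}\right)^{\frac{1}{p}},
\]
which is the $(m-1)$-fold multiple Khintchine inequality with constant $\left(\mathrm{A}_{p}\right)^{m-1}\leq D_{(m),p^{\ast}}$. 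The main obstacle I anticipate is bookkeeping: correctly matching the number of linear slots ($m$ versus $m-1$) and the exponents $p,p^{\ast}$ between the Littlewood formulation and the Khintchine target, and verifying that the discrete average produced by Proposition~\ref{multipopa} factors cleanly under the single-index reparametrization. The analytic content is light once these identifications are pinned down, since everything reduces to H\"older, sign-invariance, and the product structure already exploited in the proof of Theorem~\ref{multik}.
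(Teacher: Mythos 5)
Your proposal is correct and follows essentially the same route as the paper: Proposition~\ref{multipopa} to discretize the Rademacher integral, the sign-invariance identity~(\ref{equalm}), the auxiliary $M$-linear form $A(e_{k},e_{i_{1}},\dots,e_{i_{m-1}})=\left(\tfrac{1}{2}\right)^{\frac{N(m-1)}{p}}a_{i_{1}\cdots i_{m-1}}\delta_{\mathbf{i}}^{(k)}$ on $X_{p^{\ast}}\times X_{\infty}\times\cdots\times X_{\infty}$, H\"older's inequality to bound $\Vert A\Vert$, and the collapse of the inner $\ell_{p}$-sum over the sign index to recover the $\ell_{2}$-norm. The only cosmetic difference is that the paper writes out the case $m=2$ (i.e.\ $M=3$) in detail and asserts the general case is notationally identical, whereas you set up the general indexing directly.
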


\begin{proof}
The proof uses the ideas of the proof of the Theorem \ref{ppp}. We are going
to give the details for the case $m=2$. The proof is not different in the
general case, merely more notationally complicated.

Let $p\in\left[  1,2\right]  $, $n\in\mathbb{N}$, and $\left(  y_{jk}\right)
_{j,k=1}^{n}$ an array of scalars. By (\ref{equalm}), for any choice of signs
$\left(  \varepsilon_{j}\right)  _{j=1}^{n},\left(  \lambda_{k}\right)
_{k=1}^{n}\in\{-1,1\}^{n}$ we have%
\[
\left(  \int_{I^{2}}\left\vert \sum_{j,k=1}^{n}y_{jk}r_{j}(t)r_{k}%
(s)\right\vert ^{p}dtds\right)  ^{\frac{1}{p}}=\left(  \int_{I^{2}}\left\vert
\sum_{j,k=1}^{n}y_{jk}r_{j}(t)r_{k}(s)\varepsilon_{j}\lambda_{k}\right\vert
^{p}dtds\right)  ^{\frac{1}{p}}.
\]
Now, by Proposition \ref{multipopa}, solving the integral on the right hand
and rewriting and ordering the indexes we have
\begin{align*}
\left(  \int_{I^{2}}\left\vert \sum_{j,k=1}^{n}y_{jk}r_{j}(t)r_{k}%
(s)\right\vert ^{p}dtds\right)  ^{\frac{1}{p}} &  =\left(  \left(  \frac{1}%
{2}\right)  ^{2n}\sum_{\left(  \alpha,\delta\right)  \in D_{n}\times D_{n}%
}\left\vert \sum_{j,k=1}^{n}y_{jk}\alpha_{j}\delta_{k}\varepsilon_{j}%
\lambda_{k}\right\vert ^{p}\right)  ^{\frac{1}{p}}\\
&  =\left(  \left(  \frac{1}{2}\right)  ^{2n}\sum_{i=1}^{2^{2n}}\left\vert
\sum_{j,k=1}^{n}y_{jk}a_{jk}^{\left(  i\right)  }\varepsilon_{j}\lambda
_{k}\right\vert ^{p}\right)  ^{\frac{1}{p}}%
\end{align*}
where each $a_{jk}^{\left(  i\right)  }$ is $1$ or $-1.$

Hence, if $\left(  x_{i}\right)  _{i=1}^{\infty}\in B_{X_{p^{\ast}}}$ by
H\"{o}lder's inequality it holds
\begin{align}
\left\vert \left(  \frac{1}{2}\right)  ^{2n}\sum_{i=1}^{2^{2n}}\sum
_{j,k=1}^{n}y_{jk}a_{jk}^{\left(  i\right)  }\varepsilon_{j}\lambda_{k}%
x_{i}\right\vert  & \leq\left(  \left(  \frac{1}{2}\right)  ^{2n}\sum
_{i=1}^{2^{2n}}\left\vert \sum_{j,k=1}^{n}y_{jk}a_{jk}^{\left(  i\right)
}\varepsilon_{j}\lambda_{k}\right\vert ^{p}\right)  ^{\frac{1}{p}}\left(
\sum_{i=1}^{2^{2n}}\left\vert x_{i}\right\vert ^{p^{\ast}}\right)  ^{\frac
{1}{p^{\ast}}}\label{final}\\
& =\left(  \int_{I^{2}}\left\vert \sum_{j,k=1}^{n}y_{jk}r_{j}(t)r_{k}%
(s)\right\vert ^{p}dtds\right)  ^{\frac{1}{p}}\left\Vert \left(  x_{i}\right)
_{i=1}^{2^{2n}}\right\Vert _{p^{\ast}}\\
& \leq\left(  \int_{I^{2}}\left\vert \sum_{j,k=1}^{n}y_{jk}r_{j}%
(t)r_{k}(s)\right\vert ^{p}dtds\right)  ^{\frac{1}{p}}.
\end{align}
Define the $3-$linear form $A:X_{p^{\ast}}^{2^{2n}}\times X_{\infty}^{n}\times
X_{\infty}^{n}\rightarrow\mathbb{R}$, such that $A\left(  e_{i},e_{j}%
,e_{k}\right)  =\left(  \frac{1}{2}\right)  ^{\frac{2n}{p}}y_{jk}%
a_{jk}^{\left(  i\right)  }$, for $1\leq j,k\leq n$, $1\leq i\leq2^{2n}$.
Clearly $A$ is bounded, then by Theorem \ref{multibom} ($M=3$) and
\ref{final}:
\begin{align*}
&  \left(  \sum_{j,k=1}^{n}\left\vert y_{jk}\right\vert ^{2}\right)
^{\frac{1}{2}}\\
&  =\left(  \sum_{j,k=1}^{n}\left(  \sum_{i=1}^{2^{2n}}\left\vert \left(
\frac{1}{2}\right)  ^{\frac{2n}{p}}y_{jk}a_{jk}^{\left(  i\right)
}\right\vert ^{p}\right)  ^{\frac{2}{p}}\right)  ^{\frac{1}{2}}\\
&  =\left(  \sum_{j,k=1}^{n}\left(  \sum_{i=1}^{2^{2n}}\left\vert A\left(
e_{i},e_{j},e_{k}\right)  \right\vert ^{p}\right)  ^{\frac{2}{p}}\right)
^{\frac{1}{2}}\\
&  \leq D_{(3),p^{\ast}}\Vert A\Vert\\
&  =D_{(3),p^{\ast}}\sup\left\{  \left\vert \left(  \frac{1}{2}\right)
^{2n}\sum_{i=1}^{2^{2n}}\sum_{j,k=1}^{n}y_{jk}a_{jk}^{\left(  i\right)
}\varepsilon_{j}\lambda_{k}x_{i}\right\vert :\left\Vert x\right\Vert
_{X_{p^{\ast}}}\leq1,~\varepsilon_{j},\lambda_{k}\in\{-1,1\}\right\}  \\
&  \leq D_{(3),p^{\ast}}\left(  \int_{I^{2}}\left\vert \sum_{j,k=1}^{n}%
y_{jk}r_{j}(t)r_{k}(s)\right\vert ^{p}dtds.\right)  ^{\frac{1}{p}}%
\end{align*}
As follows, the multiple Khintchine inequality is proved for $m=2$, and
$\left(  \mathrm{A}_{p}\right)  ^{2}\leq D_{(3),p^{\ast}}$.

The proof of the general case, $m>2$, on the multiple Khintchine inequality
will follow from Theorem \ref{multibom}, considering the case $M=m+1,$ with
estimate
\[
\left(  \mathrm{A}_{p}\right)  ^{m}\leq D_{(m+1),p^{\ast}},
\]

as asserted.
\end{proof}

In this way, note that we have proved that the multiple Khintchine inequality
is equivalent to the multilinear mixed $\left(  \ell_{\frac{p}{p-1}},\ell
_{2}\right)  $-Littlewood inequality and, in general, for all $p\in\left[
1,2\right]  $ and $m\geq2$%
\[
D_{(m+1),p^{\ast}}=C_{(m+1),p^{\ast}}=\left(  \mathrm{A}_{p}\right)  ^{m},
\]
or, equivalently, for all $p\in\left[  2,\infty\right]  $ and $m\geq2$%
\[
D_{(m+1),p}=C_{(m+1),p}=\left(  \mathrm{A}_{\frac{p}{p-1}}\right)  ^{m}.
\]

As announced, this recovers and completes the estimates of the papers
\cite{racsam, pell} for the general case.

\bigskip

\end{document}